\newtheorem{definition}{Definition}[section]
\newtheorem{theorem}{Theorem}[section]
\newtheorem{lemma}{Lemma}[section]
\newtheorem*{maintheorem*}{Main Theorem}
\numberwithin{equation}{section}
\newcommand{\norm}[1]{\left\| #1 \right\|}
\newcommand{\eps}{\varepsilon}
\newcommand{\eb}{{\eps,\beta}}
\newcommand{\ueb}{u_\eb}
\newcommand{\Peb}{P_{\eps,\beta}}
\newcommand{\Feb}{F_{\eps,\beta}}
\newcommand{\Pe}{P_\eps}
\newcommand{\Fe}{F_\eps}
\newcommand{\pt}{\partial_t}
\newcommand{\px}{\partial_x }
\newcommand{\pxx}{\partial_{xx}^2}
\newcommand{\pxxx}{\partial_{xxx}^3}
\newcommand{\ptx}{\partial_{tx}^2}
\renewcommand{\i}{\ifmmode\mathit{\mathchar"7010 }\else\char"10 \fi}
\renewcommand{\j}{\ifmmode\mathit{\mathchar"7011 }\else\char"11 \fi}
\newcommand{\R}{\mathbb{R}}
\newcommand{\N}{\mathbb{N}}
\newcommand{\supp}{\mathrm{supp}\,}
\begin{document}\large

\title[The regularized short pulse equation ]{Convergence of the regularized short pulse equation \\ to the short pulse One}

\author[G. M. Coclite and L. di Ruvo]{Giuseppe Maria Coclite and Lorenzo di Ruvo}
\address[Giuseppe Maria Coclite and Lorenzo di Ruvo]
{\newline Department of Mathematics,   University of Bari, via E. Orabona 4, 70125 Bari,   Italy}
\email[]{giuseppemaria.coclite@uniba.it, lorenzo.diruvo@uniba.it}
\urladdr{http://www.dm.uniba.it/Members/coclitegm/}

\date{\today}

\keywords{Singular limit, compensated compactness, short pulse equation, entropy condition.}

\subjclass[2000]{35G25, 35L65, 35L05}


\thanks{The authors are members of the Gruppo Nazionale per l'Analisi Matematica, la Probabilit\`a e le loro Applicazioni (GNAMPA) of the Istituto Nazionale di Alta Matematica (INdAM)}

\begin{abstract}
We consider the regularized short-pulse equation, which contains nonlinear dispersive effects. We prove
that as the diffusion parameter tends to zero, the solutions of the dispersive equation converge to discontinuous weak solutions of the short-pulse one.
The proof relies on deriving suitable a priori estimates together with an application of the compensated compactness method in the $L^p$ setting.
\end{abstract}

\maketitle


\section{Introduction}
\label{sec:intro}

The nonlinear evolution equation
\begin{equation}
\label{eq:RSHP}
\px\left(\pt u-\frac{1}{6}\px (u^3)-\beta \pxxx u\right)=\gamma u, \quad \gamma>0,
\end{equation}
known as the regularized short pulse equation, was derived by Costanzino, Manukian and Jones \cite{CMJ}
in the context of the nonlinear Maxwell equations with high-frequency dispersion. Mathematical properties of the regularized short pulse equation \eqref{eq:RSHP} were studied
recently in many details, including the local and global well-posedness in energy space
\cite{CMJ,PS}, and stability of solitary waves \cite{CMJ}.
We rewrite \eqref{eq:RSHP} in the following way
\begin{equation}
\label{eq:RSHP-u}
\begin{cases}
\displaystyle \pt u -\frac{1}{6}\px (u^3)- \beta \pxxx u=\gamma \int^{x}_{0} u(t,y) dy,&\qquad t>0, \ x\in\R,\\
u(0,x)=u_0(x), &\qquad x\in\R,
\end{cases}
\end{equation}
or equivalently,
\begin{equation}
\label{eq:RSHP-p}
\begin{cases}
\displaystyle \pt u-\frac{1}{6} \px (u^3) - \beta\pxxx u=\gamma P,&\qquad t>0, \ x\in\R ,\\
\px P=u,&\qquad t>0, \ x\in\R,\\
 P(t,0)=0,& \qquad t>0,\\
 u(0,x)=u_0(x), &\qquad x\in\R.
\end{cases}
\end{equation}
We are interested in the no high frequency limit,  i.e., we send $\beta\to 0$ in \eqref{eq:RSHP}. In this way, we pass from \eqref{eq:RSHP} to the equation
\begin{equation}
\label{eq:SHP}
\begin{cases}
\displaystyle \px\left(\pt u-\frac{1}{6}\px (u^3)\right)=\gamma u,&\qquad t>0, \ x\in\R ,\\
u(0,x)=u_0(x), &\qquad x\in\R.
\end{cases}
\end{equation}
The equation \eqref{eq:SHP} is known as the short pulse equation, and was introduced recently by
Sch\"afer and Wayne \cite{SW} as  a model equation describing the propagation of ultra-short light pulses in silica optical fibers.
It provides also an approximation of nonlinear wave packets in dispersive media in the limit of few cycles on the ultra-short pulse scale.
Numerical simulations \cite{CJSW} show that the short pulse equation approximation 
to Maxwell's equations in the case when the pulse spectrum is not narrowly localized 
around the carrier frequency is better than the one obtained from the nonlinear Schr\"odinger 
equation, which models the evolution of slowly varying wave trains.
Such ultra-short plays a key role in the development of future technologies of ultra-fast optical transmission of informations.

In \cite{B} the author studied a new hierarchy of equations containing the short pulse equation \eqref{eq:SHP} and the elastic 
beam equation, which describes nonlinear transverse oscillations of elastic beams 
under tension. He showed that the hierarchy of equations is integrable. He obtained the two compatible Hamiltonian structures
and constructed an infinite series of both local and nonlocal conserved charges. Moreover, he gave the Lax description for both systems. 
The integrability and the existence of solitary wave solutions have been studied in \cite{SS, SS1}.
Well-posedness and wave breaking for the short pulse equation have been 
studied in \cite{SW} and \cite{LPS}, respectively. 

We remark here that \eqref{eq:RSHP} and \eqref{eq:SHP} look similar to the Ostrovsky and Ovstrosky-Hunter equations
\begin{equation}
\label{eq:OH}
\px \left(\pt u+\frac{1}{2}\px (u^2)-\beta\pxxx u\right)=\gamma u,\qquad
\px\left(\pt u+\frac{1}{2}\px (u^2)\right)=\gamma u,
\end{equation}
which was derived by Ostrovsky \cite{O} and Hunter and Tan \cite{HT} as a model for internal solitary waves in the ocean with rotation effects of strength $\gamma$. 
The deep difference is in the power and the sing of the non linear term.
The well-posedness on the context of discontinuous solutions of the Ovstrosky-Hunter equation has been studied in \cite{Cd, CdRK, CdK, dR}.
In \cite{Cd2} we proved the convergence of the solutions of the Ostrovsky equation to the discontinuous ones of the Ostrovsky-Hunter equation.

Integrating \eqref{eq:SHP} with respect to $x$ we gain the integro-differential formulation of  \eqref{eq:SHP} (see \cite{SS})
\begin{equation*}
\pt u -\px \left(\frac{u^3}{6}\right)=\gamma \int^x_0 u(t,y) dy,
\end{equation*}
that is equivalent to
\begin{equation}
\label{eq:OHwP}
\pt u-\px \left(\frac{u^3}{6}\right)=\gamma P,\qquad \px P=u,\quad P(\cdot,0)=0,\quad u(0,\cdot)=u_0.
\end{equation}
On the initial datum, we assume that
\begin{equation}
\label{eq:assinit}
u_0\in L^2(\R)\cap L^{6}(\R),\quad\int_{\R}u_{0}(x)dx=0,
\end{equation}
and on the function
\begin{equation}
\label{eq:def-di-P0}
P_{0}(x)=\int_{-\infty}^{x} u_{0}(y)dy, \quad x\in\R,
\end{equation}
we assume that
\begin{equation}
\label{eq:L-2P0}
\begin{split}
\norm{P_0}^2_{L^2(\R)}&=\int_{\R}\left(\int_{-\infty}^{x}u_{0}(y)dy\right)^2dx <\infty,\\
\int_{\R}P_0(x)dx&= \int_{\R}\left(\int_{-\infty}^{x}u_{0}(y)dy\right)dx=0.
\end{split}
\end{equation}
\begin{definition}
\label{def:sol}
We say that $u\in  L^{\infty}((0,T)\times\R),\,T>0,$ is an entropy solution of the initial
value problem \eqref{eq:SHP} if
\begin{itemize}
\item[$i$)] $u$ is a distributional solution of \eqref{eq:OHwP};
\item[$ii$)] for every convex function $\eta\in  C^2(\R)$ the
entropy inequality
\begin{equation}
\label{eq:OHentropy}
\pt \eta(u)+ \px q(u)-\gamma\eta'(u) P\le 0, \qquad     q(u)=-\int^u \frac{\xi^2}{2} \eta'(\xi)\, d\xi
\end{equation}
holds in the sense of distributions in $(0,\infty)\times\R$.
\end{itemize}
\end{definition}
In \cite{Cd1}, it is proved that \eqref{eq:SHP} has an unique entropy solution in the sense of Definition \ref{def:sol} that depends Lipschitz continuously on the initial condition.

We study the dispersion-diffusion limit for \eqref{eq:RSHP}. Therefore, we fix two small numbers $0<\eps,\, \beta <1$ and consider the following third order problem
\begin{equation}
\label{eq:OHepsw}
\begin{cases}
\displaystyle \pt \ueb -\frac{1}{6}\px \ueb^3 -\beta\pxxx\ueb=\gamma\Peb+ \eps\pxx\ueb,&\quad t>0,\ x\in\R,\\
-\eps\pxx\Peb+\px\Peb=\ueb,&\quad t>0,\ x\in\R,\\
\Peb(t,0)=0,&\quad t>0,\\
\ueb(0,x)=u_{\eps,\beta,0}(x),&\quad x\in\R,
\end{cases}
\end{equation}
where $u_{\eps,\beta,0}$ is a $C^\infty$ approximation of $u_{0}$ such that
\begin{equation}
\begin{split}
\label{eq:u0eps}
&\norm{u_{\eps,\beta, 0}}^2_{L^2(\R)}+ \norm{u_{\eps,\beta, 0}}^6_{L^6(\R)}+(\beta+ \eps^2) \norm{\px u_{\eps,\beta,0}}^2_{L^2(\R)}\le C_{0}, \quad \eps,\beta >0,\\
& \int_{\R} u_{\eps,\beta,0}(x) dx =0, \quad \eps,\beta >0,\\
& \int_{\R}P_{0,\eps, \beta}(x)dx=0, \quad \eps,\beta>0,\\
& \norm{P_{\eps,\beta,0}}^2_{L^2(\R)}+\eps^2 \norm{\px P_{\eps,\beta,0}}^2_{L^2(\R)}\le C_{0}, \quad \eps,\beta >0,
\end{split}
\end{equation}
where $C_0$ is a constant independent on $\eps$ and $\beta$.

The main result of this paper is the following theorem.

\begin{theorem}
\label{th:main}
Assume that \eqref{eq:assinit}, \eqref{eq:def-di-P0}, \eqref{eq:L-2P0}, and \eqref{eq:u0eps} hold.
If
\begin{equation}
\label{eq:beta-eps}
\beta=\mathbf{\mathcal{O}}(\eps^2),
\end{equation}
then, there exist two sequences $\{\eps_{n}\}_{n\in\N}$, $\{\beta_{n}\}_{n\in\N}$, with $\eps_n, \beta_n \to 0$, and two limit functions
\begin{align*}
&u\in L^{\infty}(0,T; L^2(\R)\cap L^6(\R)), \quad T>0,\\
&P\in L^{\infty}((0,T)\times\R)\cap L^{2}((0,T)\times\R), \quad T>0,
\end{align*}
such that
\begin{itemize}
\item[$i)$] $u$ is  a distributional solution of \eqref{eq:SHP},
\item[$ii)$] $u_{\eps_n, \beta_n}\to u$ strongly in $L^{p}_{loc}((0,\infty)\times\R)$, for each $1\le p <6$,
\item[$iii)$] $P_{\eps_n, \beta_n}\to P$ strongly in $L^{\infty}((0,T)\times\R)\cap L^{2}((0,T)\times\R)$, $T>0$.
\end{itemize}
Moreover, if
\begin{equation}
\label{eq:beta-eps-1}
\beta=o(\eps^2),
\end{equation}
then,
\begin{itemize}
\item[$iv)$] $u$ is  the unique entropy solution of \eqref{eq:SHP},
\item[$v)$] $P_{\eps_n, \beta_n}\to P$ strongly in $L^{p}_{loc}(0,\infty;W^{1,\infty}(\R)\cap H^{1}(\R))$, for each $1\le p < 6$.
\end{itemize}
In particular, we have
\begin{equation}
\label{eq:umedianulla}
\int_{\R} u(t,x)dx =0, \quad t>0.
\end{equation}
\end{theorem}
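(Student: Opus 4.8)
The plan is to run the classical vanishing-viscosity/compensated-compactness scheme, with the extra twist that two parameters $\eps$ (diffusion) and $\beta$ (dispersion) go to zero together under the scaling \eqref{eq:beta-eps}. First I would establish uniform-in-$(\eps,\beta)$ a priori estimates for the regularized system \eqref{eq:OHepsw}. Multiplying the first equation by $\ueb$ and integrating over $\R$, using the elliptic relation $-\eps\pxx\Peb+\px\Peb=\ueb$ and the normalization $\int\ueb\,dx=0$ to absorb the nonlocal term $\gamma\Peb$, should yield a bound on $\norm{\ueb(t)}_{L^2(\R)}$ together with a dissipation term $\eps\int_0^t\norm{\px\ueb}_{L^2(\R)}^2$; multiplying instead by $\ueb^5$ gives the $L^6$ bound with its own dissipation. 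The elliptic equation then controls $\norm{\Peb}_{L^2}$, $\norm{\Peb}_{L^\infty}$ and $\norm{\px\Peb}_{L^\infty}$ in terms of the $\ueb$-norms. The hypothesis $\beta=\mathcal{O}(\eps^2)$ enters precisely here, ensuring the dispersive term $\beta\pxxx\ueb$ does not spoil the energy balance and that the additional weighted gradient bounds (e.g. $(\beta+\eps^2)\norm{\px\ueb}_{L^2}^2$, consistent with \eqref{eq:u0eps}) stay uniform. The net outcome is $\{\ueb\}$ bounded in $L^\infty(0,T;L^2(\R)\cap L^6(\R))$ and $\{\Peb\}$ bounded in the spaces appearing in (iii).

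Next comes the compensated-compactness step, which is unavoidable since we have only $L^p$ control and cannot pass to the limit directly in the nonlinearity $\ueb^3$. For a convex $\eta\in C^2(\R)$ with flux $q$ as in \eqref{eq:OHentropy}, multiplying the first equation of \eqref{eq:OHepsw} by $\eta'(\ueb)$ gives
\[
\pt\eta(\ueb)+\px q(\ueb)-\gamma\eta'(\ueb)\Peb=\eps\eta'(\ueb)\pxx\ueb+\beta\eta'(\ueb)\pxxx\ueb.
\]
After integration by parts the right-hand side splits into divergence terms (which tend to zero in $H^{-1}_{\loc}$ because $\eps\px\ueb$ and $\beta\pxx\ueb\to0$ in $L^2_{\loc}$ by the estimates above), the favorable dissipation $-\eps\eta''(\ueb)(\px\ueb)^2\le0$ bounded in $L^1_{\loc}$, and the single troublesome cubic error $\tfrac{\beta}{2}\,\eta'''(\ueb)\,(\px\ueb)^3$ of indefinite sign. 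Since $\gamma\eta'(\ueb)\Peb$ and $\px q(\ueb)$ are bounded in $W^{-1,p}_{\loc}$ for some $p>2$ by the $L^p$ estimates, Murat's lemma upgrades the whole entropy production to a precompact subset of $H^{-1}_{\loc}$. Because the flux $\xi\mapsto-\xi^3/6$ is nowhere affine, Tartar's theorem forces the associated Young measure to be a Dirac mass, yielding strong $L^p_{\loc}$ convergence of a subsequence $u_{\eps_n,\beta_n}\to u$, which is (ii); passing to the limit in the weak formulation of \eqref{eq:OHwP} gives (i), and (iii) follows from the elliptic equation together with the strong convergence of $\ueb$.

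Finally, for (iv) and (v) I would sharpen the analysis under the stronger scaling \eqref{eq:beta-eps-1}. The only obstruction to the entropy inequality \eqref{eq:OHentropy} in the limit is the cubic term $\tfrac{\beta}{2}\,\eta'''(\ueb)\,(\px\ueb)^3$: bounding it by $\beta\,\norm{\px\ueb}_{L^\infty}\!\int(\px\ueb)^2$ and comparing against the available diffusion dissipation $\eps\int(\px\ueb)^2$ shows that its contribution is controlled by $(\beta/\eps^2)$ times a bounded quantity, hence vanishes when $\beta=o(\eps^2)$. Thus the limit $u$ satisfies \eqref{eq:OHentropy} for every convex $\eta$, i.e.\ $u$ is an entropy solution; uniqueness and the identification of the entire family (not merely a subsequence) then follow from the well-posedness result of \cite{Cd1}. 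The improved convergence (v) of $P$ is obtained by re-running the elliptic estimates with the now-strong convergence of $u$ in hand, and \eqref{eq:umedianulla} passes to the limit from $\int\ueb\,dx=0$.

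The hard part will be the compensated-compactness step, and within it the control of the dispersive cubic error $\tfrac{\beta}{2}\,\eta'''(\ueb)\,(\px\ueb)^3$: unlike the parabolic term it carries no sign, so every a priori estimate must be arranged so that its $\beta$-contribution is dominated by the $\eps$-dissipation. Matching the powers of $\eps$ and $\beta$ in each estimate so that $\mathcal{O}(\eps^2)$ yields $H^{-1}_{\loc}$-compactness while $o(\eps^2)$ yields the sharp entropy sign is the delicate bookkeeping at the heart of the argument.
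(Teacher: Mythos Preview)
Your overall architecture matches the paper, but the treatment of the dispersive remainder has a real gap. You integrate $\beta\eta'(\ueb)\pxxx\ueb$ by parts twice to reach the cubic error $\tfrac{\beta}{2}\eta'''(\ueb)(\px\ueb)^3$ and propose bounding it by $\beta\norm{\px\ueb}_{L^\infty}\int(\px\ueb)^2$; but nothing in the multipliers you list (test with $\ueb$, test with $\ueb^5$) gives any control on $\norm{\px\ueb}_{L^\infty}$ or on $\int|\px\ueb|^3$. The paper integrates by parts only once, leaving the remainder $-\beta\eta''(\ueb)\px\ueb\pxx\ueb$, and bounds it in $L^1$ via
\[
\beta\norm{\px\ueb\pxx\ueb}_{L^1((0,T)\times\R)}\le\frac{\beta}{\eps^2}\Bigl(\eps\!\int_0^T\!\norm{\px\ueb}^2_{L^2}\Bigr)^{1/2}\Bigl(\eps^3\!\int_0^T\!\norm{\pxx\ueb}^2_{L^2}\Bigr)^{1/2},
\]
which makes the dichotomy $\beta=\mathcal{O}(\eps^2)$ (bounded in $L^1$, hence $H^{-1}_{\loc}$-compactness by Murat) versus $\beta=o(\eps^2)$ (convergence to zero, hence the entropy sign) completely transparent.

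This route---and indeed your own, since you also assert $\beta\pxx\ueb\to 0$ in $L^2_{\loc}$ for the divergence term---requires the second-order dissipation estimate $\eps^3\int_0^T\norm{\pxx\ueb}^2_{L^2}\le C(T)$. That bound does \emph{not} follow from testing with $\ueb^5$; the paper obtains it by testing with $\ueb^5-3\eps^2\pxx\ueb$, the extra piece producing precisely $3\eps^3\norm{\pxx\ueb}^2_{L^2}$. Closing that estimate in turn forces an intermediate bound $\norm{\ueb}_{L^\infty((0,T)\times\R)}\le C(T)\beta^{-1/2}$ (needed to absorb the sign-indefinite contribution $-10\beta\int\ueb^3(\px\ueb)^2$), itself obtained via yet another nonstandard multiplier $-\beta\pxx\ueb-\tfrac{1}{6}\ueb^3$ coupled with the evolution equation for $\Peb$. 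These two multiplier choices are the technical core of Section~\ref{sec:vv} and are absent from your sketch. One smaller point: with only $L^6$ (not $L^\infty$) control on $\ueb$, the compensated compactness step uses Schonbek's $L^p$ framework \cite{SC} rather than the classical Tartar argument you invoke.
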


The paper is organized in two sections. In Section \ref{sec:vv}, we prove some a priori estimates, while in Section \ref{sec:theor} we prove Theorem \ref{th:main}.

\section{A priori Estimates}
\label{sec:vv}

This section is devoted to some a priori estimates on $\ueb$. We denote with $C_0$ the constants which depend only on the initial data, and with $C(T)$, the constants which depend also on $T$.

Arguing as \cite[Lemmas $3.1$, $3.2$, $3.4$, $3.5$]{Cd1}, we obtain the following result
\begin{lemma}\label{lm:15}
\label{lm:cns-1}
For each $t\in (0,\infty)$,
\begin{align}
\label{eq:P-pxP-intfy-1}
\Peb(t,-\infty)=\px\Peb(t,-\infty)=\Peb(t,\infty)=\px\Peb(t,\infty)=&0,\\
\label{eq:equ-L2-stima-1}
\eps^2\norm{\pxx\Peb(t,\cdot)}^2_{L^2(\R)}+ \norm{\px\Peb(t,\cdot)}^2_{L^2(0,\infty)}=&\norm{\ueb(t,\cdot)}^2_{L^2(0,\infty)},\\
\label{eq:int-u-1}
\int_{\R}\ueb(t,x) dx =&0,\\
\label{eq:L-infty-Px-1}
\sqrt{\eps}\norm{\px\Peb(t, \cdot)}_{L^{\infty}(\R)}\le& \norm{\ueb(t,\cdot)}_{L^2(\R)},\\
\label{eq:uP-1}
\int_{\R}\ueb(t,x)\Peb(t,x) dx\le& \norm{\ueb(t,\cdot)}^2_{L^2(\R)}.
\end{align}
Moreover, the inequality holds
\begin{equation}
\label{eq:stima-l-2-1}
\norm{\ueb(t,\cdot)}^2_{L^2(\R)}+ 2\eps e^{2\gamma t}\int_{0}^{t} e^{-2\gamma s}\norm{\px\ueb(s,\cdot)}^2_{L^2(\R)}ds\le e^{2\gamma t}C_{0}.
\end{equation}
In particular, we have
\begin{equation}
\label{eq:10021-1}
\begin{split}
\eps\norm{\pxx\Peb(t,\cdot)}_{L^2(\R)}, \norm{\px\Peb(t,\cdot)}_{L^2(\R)}&\le e^{\gamma t}C_{0},\\
\sqrt{\eps}\norm{\px\Peb(t, \cdot)}_{L^{\infty}(\R)}&\le e^{\gamma t}C_{0},\\
\sqrt{\eps}\vert\px\Peb(t,0)\vert &\le e^{\gamma t}C_{0}.
\end{split}
\end{equation}
\end{lemma}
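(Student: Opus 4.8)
The plan is to establish \eqref{eq:P-pxP-intfy-1}--\eqref{eq:10021-1} as a sequence of energy-type identities and inequalities obtained by manipulating the system \eqref{eq:OHepsw}, closely paralleling the argument of \cite[Lemmas $3.1$--$3.5$]{Cd1}. First I would treat the elliptic relation $-\eps\pxx\Peb+\px\Peb=\ueb$ together with the boundary condition $\Peb(t,0)=0$. Since $\ueb(t,\cdot)\in L^2(\R)$ by the assumed smoothness of the approximation, solving this linear ODE in $x$ and using integrability forces the decay \eqref{eq:P-pxP-intfy-1} of $\Peb$ and $\px\Peb$ at $\pm\infty$. Multiplying the same elliptic equation by $\px\Peb$ and integrating over $\R$, then integrating by parts and using the vanishing at infinity, yields the identity \eqref{eq:equ-L2-stima-1} relating $\eps^2\norm{\pxx\Peb}_{L^2}^2$ and $\norm{\px\Peb}_{L^2(0,\infty)}^2$ to $\norm{\ueb}_{L^2(0,\infty)}^2$.

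Next I would derive the zero-mean property \eqref{eq:int-u-1}. The natural route is to integrate the first equation of \eqref{eq:OHepsw} over $x\in\R$: the flux terms $\px\ueb^3$, $\pxxx\ueb$, and $\pxx\ueb$ all integrate to zero by the decay at infinity, leaving $\frac{d}{dt}\int_{\R}\ueb\,dx=\gamma\int_{\R}\Peb\,dx$. One then shows $\int_{\R}\Peb\,dx=0$ is propagated (using the third line of \eqref{eq:u0eps} for the initial value and the elliptic relation), so that the zero initial mean $\int_{\R}u_{\eps,\beta,0}\,dx=0$ is preserved for all $t$. The pointwise bound \eqref{eq:L-infty-Px-1} follows from $\sqrt{\eps}\,\px\Peb(t,\cdot)\in H^1$-type control: writing $\px\Peb(x)^2=2\int_{-\infty}^x\px\Peb\,\pxx\Peb\,dy$, applying Cauchy--Schwarz, and invoking \eqref{eq:equ-L2-stima-1} gives $\eps\norm{\px\Peb}_{L^\infty}^2\le\norm{\ueb}_{L^2}^2$. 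The inequality \eqref{eq:uP-1} comes from $\int_{\R}\ueb\Peb\,dx=\int_{\R}(-\eps\pxx\Peb+\px\Peb)\Peb\,dx$; integrating by parts, the term $\int\px\Peb\,\Peb\,dx$ vanishes and $-\eps\int\pxx\Peb\,\Peb\,dx=\eps\int(\px\Peb)^2\,dx\le\norm{\ueb}_{L^2}^2$ by \eqref{eq:equ-L2-stima-1}.

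The core energy estimate \eqref{eq:stima-l-2-1} is obtained by multiplying the first equation of \eqref{eq:OHepsw} by $\ueb$ and integrating over $\R$. The nonlinear term $\int\ueb\px\ueb^3\,dx$ and the dispersive term $\beta\int\ueb\pxxx\ueb\,dx$ vanish by integration by parts and decay; the diffusion term yields $-\eps\norm{\px\ueb}_{L^2}^2$; and the source contributes $\gamma\int\ueb\Peb\,dx$, which by \eqref{eq:uP-1} is bounded by $\gamma\norm{\ueb}_{L^2}^2$. This produces the differential inequality $\frac{d}{dt}\norm{\ueb}_{L^2}^2+2\eps\norm{\px\ueb}_{L^2}^2\le2\gamma\norm{\ueb}_{L^2}^2$, and Gronwall's inequality in the integrating-factor form $e^{2\gamma t}\frac{d}{dt}(e^{-2\gamma t}\norm{\ueb}_{L^2}^2)$ delivers \eqref{eq:stima-l-2-1} with the constant $C_0$ from \eqref{eq:u0eps}. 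Finally, \eqref{eq:10021-1} is immediate: combining \eqref{eq:stima-l-2-1} with \eqref{eq:equ-L2-stima-1} bounds $\eps\norm{\pxx\Peb}_{L^2}$ and $\norm{\px\Peb}_{L^2}$ by $e^{\gamma t}C_0$, and inserting \eqref{eq:stima-l-2-1} into \eqref{eq:L-infty-Px-1} gives the $L^\infty$ bound, whose evaluation at $x=0$ yields the last line.

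I expect the main obstacle to be the careful justification of the boundary behavior at $\pm\infty$ in \eqref{eq:P-pxP-intfy-1} and the attendant vanishing of boundary terms in every integration by parts. All of the identities above silently discard boundary contributions, so one must verify that $\ueb$ and its derivatives, as well as $\Peb$ and $\px\Peb$, decay sufficiently fast; this rests on the $C^\infty$ approximation property and the smoothing effect of the $\eps\pxx$ and $\beta\pxxx$ terms, and is the point where the argument of \cite{Cd1} must be adapted to the present dispersive system rather than reproved from scratch.
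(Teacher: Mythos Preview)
Your overall strategy matches what the paper intends (it simply cites \cite[Lemmas $3.1$--$3.5$]{Cd1}), and the main energy estimate \eqref{eq:stima-l-2-1} via multiplication by $\ueb$, \eqref{eq:uP-1}, and Gronwall is exactly right. Two small technical points deserve correction, though neither is a genuine gap.

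First, for \eqref{eq:equ-L2-stima-1}: multiplying the elliptic equation by $\px\Peb$ and integrating over $\R$ gives only $\norm{\px\Peb}_{L^2}^2=\int_{\R}\ueb\,\px\Peb\,dx$, not the stated identity. The clean route is to square the relation $-\eps\pxx\Peb+\px\Peb=\ueb$ and integrate; the cross term $-2\eps\int_{\R}\pxx\Peb\,\px\Peb\,dx=-\eps[(\px\Peb)^2]_{-\infty}^{\infty}$ vanishes by \eqref{eq:P-pxP-intfy-1}, leaving $\eps^2\norm{\pxx\Peb}_{L^2}^2+\norm{\px\Peb}_{L^2}^2=\norm{\ueb}_{L^2}^2$.

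Second, for \eqref{eq:int-u-1}: your route through the evolution equation requires $\int_{\R}\Peb\,dx=0$, which in the paper is established only in Lemma~\ref{lm:p8} \emph{after} the present lemma, so the argument as written risks circularity. The direct argument is much simpler: integrate the elliptic equation itself over $\R$ to obtain
\[
\int_{\R}\ueb(t,x)\,dx=\big[-\eps\,\px\Peb+\Peb\big]_{-\infty}^{\infty}=0
\]
by \eqref{eq:P-pxP-intfy-1}. With these two adjustments your sketch goes through verbatim.
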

Arguing as \cite[Lemma $2.3$]{Cd2}, we have can prove the following lemma.
\begin{lemma}
\label{lm:p8}
For each $t\ge 0$, we have that
\begin{align}
\label{eq:intp-infty}
\int_{0}^{-\infty}\Peb(t,x)dx&=a_{\eps,\beta}(t), \\
\label{eq:int+infty}
\int_{0}^{\infty}\Peb(t,x)dx&=a_{\eps,\beta}(t),
\end{align}
where
\begin{equation*}
a_{\eps,\beta}(t)= \frac{1}{\gamma}\left(\eps\ptx\Peb(t,0)+\frac{1}{6}\ueb^3(t,0)+\beta\pxx\ueb(t,0)+\eps\px\ueb(t,0)\right).
\end{equation*}
Moreover,
\begin{equation}
\label{eq:Pmedianulla}
\int_{\R}\Peb(t,x)dx=0, \quad t\geq 0.
\end{equation}
\end{lemma}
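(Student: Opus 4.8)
The plan is to derive all three identities by integrating the evolution equation for $\ueb$ separately over the half-lines $(0,\infty)$ and $(-\infty,0)$, converting every boundary contribution at $x=\pm\infty$ into zero through Lemma~\ref{lm:cns-1}, and then reading off $\int_0^{\pm\infty}\Peb\dx$ from the resulting relations. The identity \eqref{eq:Pmedianulla} will then follow simply by adding the two half-line statements, since their right-hand sides turn out to be opposite.

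The preliminary step I would carry out is to evaluate the half-line integrals of $\ueb$ in closed form using the elliptic relation $-\eps\pxx\Peb+\px\Peb=\ueb$. Integrating this over $(0,\infty)$ and using $\Peb(t,0)=0$ together with $\Peb(t,\infty)=\px\Peb(t,\infty)=0$ from \eqref{eq:P-pxP-intfy-1}, all but one term collapse and I obtain $\int_0^\infty\ueb\dx=\eps\px\Peb(t,0)$; the same computation on $(-\infty,0)$ gives $\int_{-\infty}^0\ueb\dx=-\eps\px\Peb(t,0)$. Differentiating in time then produces $\pt\int_0^\infty\ueb\dx=\eps\ptx\Peb(t,0)$ and its negative on the left half-line, which will account for the $\eps\ptx\Peb(t,0)$ term in $a_{\eps,\beta}(t)$.

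Next I would integrate the first equation of \eqref{eq:OHepsw} over $(0,\infty)$. After integration by parts, using the decay of $\ueb,\px\ueb,\pxx\ueb$ at $+\infty$, the flux, dispersive and diffusive terms reduce to the boundary values $\tfrac16\ueb^3(t,0)$, $\beta\pxx\ueb(t,0)$ and $\eps\px\ueb(t,0)$ at $x=0$, while the time term equals $\eps\ptx\Peb(t,0)$ by the previous step. Solving the resulting relation for $\int_0^\infty\Peb\dx$ reproduces $a_{\eps,\beta}(t)$ exactly, which is \eqref{eq:int+infty}. Repeating the argument verbatim on $(-\infty,0)$, where each boundary term changes sign, gives $\int_{-\infty}^0\Peb\dx=-a_{\eps,\beta}(t)$, equivalently \eqref{eq:intp-infty}; adding the two identities then cancels the right-hand sides and yields \eqref{eq:Pmedianulla}.

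The main obstacle is one of justification rather than computation: I must guarantee that $\ueb$ and its first two spatial derivatives decay at $\pm\infty$ quickly enough for all boundary terms to vanish, and that the time derivative may be taken under the integral sign in $\pt\int\ueb\dx$. This is precisely where the smoothness of the approximating datum $u_{\eps,\beta,0}\in C^\infty$ and the uniform $L^2$-type bounds of Lemma~\ref{lm:cns-1} on $\ueb$, $\px\Peb$ and $\eps\pxx\Peb$ enter, supplying the integrability and decay needed for the smooth solution of \eqref{eq:OHepsw}, exactly as in \cite[Lemma~$2.3$]{Cd2}.
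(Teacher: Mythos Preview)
Your proposal is correct and matches the approach the paper points to via the reference \cite[Lemma~$2.3$]{Cd2}: integrate the elliptic relation and then the evolution equation over each half-line, collect boundary terms at $x=0$, and read off the value of $\int_0^{\pm\infty}\Peb\dx$. The only cosmetic remark is that the paper states \eqref{eq:intp-infty} with the reversed orientation $\int_0^{-\infty}$, so what you obtained as $\int_{-\infty}^0\Peb\dx=-a_{\eps,\beta}(t)$ is exactly \eqref{eq:intp-infty} once the sign from the limits is absorbed.
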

Lemma \ref{lm:p8} says that $\Peb(t,\cdot)$  is integrable at $\pm\infty$.
Therefore, for each $t\ge 0$, we can consider the following function
\begin{equation}
\label{eq:F1}
\Feb(t,x)=\int_{-\infty}^{x}\Pe(t,y)dy.
\end{equation}
Thus, integrating the first equation in \eqref{eq:OHepsw} on $(-\infty,x)$, thanks to \eqref{eq:F1}, we  obtain that
\begin{equation}
\label{eq:intP1}
\int_{-\infty}^{x}\pt\ueb dy-\frac{1}{6}\ueb^3 - \beta\pxx\ueb-\eps\px\ueb=\gamma\Feb.
\end{equation}
Thanks to \eqref{eq:P-pxP-intfy-1}, integrating on $\R$  the second equation in \eqref{eq:OHepsw}, we have
\begin{equation}
\label{eq:equaP}
\Peb(t,x)= \int_{-\infty}^{x} \ueb(t,y)dy +\eps\px\Pe(t,x).
\end{equation}
Differentiating \eqref{eq:equaP} with respect to $t$, we get
\begin{equation}
\label{eq:equap-1}
\pt\Peb(t,x)= \int_{-\infty}^{x} \pt\ueb(t,y)dy +\eps\ptx\Peb(t,x).
\end{equation}
Therefore, \eqref{eq:intP1} and \eqref{eq:equap-1} give
\begin{equation}
\label{eq:equat-P}
\pt\Peb-\eps\ptx\Peb -\frac{1}{6}\ueb^3 - \beta\pxx\ueb-\eps\px\ueb=\gamma\Feb.
\end{equation}

\begin{lemma}\label{lm:u-infty}
Fixed $T>0$. There exists a function $C(T)>0$, independent on $\eps$ and $\beta$, such that
\begin{equation}
\label{eq:l-infty-u}
\norm{\ueb}_{L^{\infty}((0,T)\times\R)}\le C(T)\beta^{-\frac{1}{2}}.
\end{equation}
Moreover, for every $0\le t\le T$,
\begin{equation}
\label{eq:ux2}
\begin{split}
\beta\norm{\px\ueb(t,\cdot)}^2_{L^2(\R)} &+\gamma\norm{\Peb(t,\cdot)}^2_{L^2(\R)}+ \eps^2\gamma \norm{\px\Peb(t,\cdot)}^2_{L^2(\R)} \\
&+ \beta\eps e^{2\gamma t}\int_{0}^{t}e^{-2 \gamma s}\norm{\pxx\ueb(s,\cdot)}^2_{L^2(\R)}ds\le C(T)\beta^{-2}.
\end{split}
\end{equation}
\end{lemma}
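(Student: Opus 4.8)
The plan is to prove both bounds simultaneously from a single energy inequality for a corrected functional, in which a quartic term is tuned to annihilate the only genuinely dangerous contribution. First I would test the first equation of \eqref{eq:OHepsw} against $-\beta\pxx\ueb$ and integrate over $\R$: by \eqref{eq:P-pxP-intfy-1} every boundary term vanishes, the pure dispersion $\beta\pxxx\ueb$ drops out, the diffusion produces the favourable term $\beta\eps\norm{\pxx\ueb}^2_{L^2(\R)}$, and two integrations by parts turn the nonlinearity $-\tfrac16\px(\ueb^3)$ into the cubic term $\tfrac{\beta}{2}\int_\R\ueb(\px\ueb)^3\dx$. This term is the crux of the matter: for the power of $\beta$ sought it cannot be absorbed into the diffusion. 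The remedy is to test the same equation \emph{also} against $-\tfrac16\ueb^3$, which differentiates $-\tfrac1{24}\int_\R\ueb^4\dx$; now the dispersion contributes exactly $-\tfrac{\beta}{2}\int_\R\ueb(\px\ueb)^3\dx$ and cancels the offending term, the nonlinearity integrates to zero, and the diffusion leaves only the residual $\tfrac{\eps}{2}\int_\R\ueb^2(\px\ueb)^2\dx$.

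For the $P$ contribution I would test \eqref{eq:equat-P} against $\gamma(\Peb-\eps\px\Peb)$. The two cross terms in $\gamma\int_\R(\pt\Peb-\eps\ptx\Peb)(\Peb-\eps\px\Peb)\dx$ cancel, so the left-hand side collapses to $\frac{d}{dt}\big(\tfrac{\gamma}{2}\norm{\Peb}^2_{L^2(\R)}+\tfrac{\gamma\eps^2}{2}\norm{\px\Peb}^2_{L^2(\R)}\big)$, precisely the pair of terms in \eqref{eq:ux2}; by \eqref{eq:equaP} the multiplier equals $\int_{-\infty}^{x}\ueb\dy$. Summing the three identities, the cubic term disappears and I obtain the evolution law for
\[
\E(t)=\tfrac{\beta}{2}\norm{\px\ueb}^2_{L^2(\R)}-\tfrac1{24}\int_\R\ueb^4\dx+\tfrac{\gamma}{2}\norm{\Peb}^2_{L^2(\R)}+\tfrac{\gamma\eps^2}{2}\norm{\px\Peb}^2_{L^2(\R)},
\]
whose right-hand side gathers $\tfrac{\eps}{2}\int_\R\ueb^2(\px\ueb)^2\dx$, the dissipative $-\gamma\eps\norm{\ueb}^2_{L^2(\R)}$, the coupling $-\beta\gamma\int_\R\Peb\pxx\ueb\dx$ (which, via \eqref{eq:equaP} and the second equation of \eqref{eq:OHepsw}, becomes $\beta\gamma\int_\R\px\Peb\px\ueb\dx$), and the lower order products $\int_\R\ueb^3\Peb\dx$, $\int_\R\ueb^3\big(\int_{-\infty}^{x}\ueb\dy\big)\dx$, and $\gamma^2\int_\R\Feb\big(\int_{-\infty}^{x}\ueb\dy\big)\dx$, the last of which reduces through \eqref{eq:equaP} and $\px\Feb=\Peb$ to $\gamma^2\eps\norm{\Peb}^2_{L^2(\R)}$.

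I would then integrate on $(0,t)$ and estimate the right-hand side. The residual is controlled by $\tfrac12\norm{\ueb}^2_{L^\infty((0,T)\times\R)}\,\eps\int_0^t\norm{\px\ueb(s,\cdot)}^2_{L^2(\R)}\ds$, whose space--time factor is $\le C(T)$ by \eqref{eq:stima-l-2-1}; the coupling and the lower order products are handled by Young's inequality and the $L^2$ bounds for $\Peb$, $\px\Peb$ and $\int_{-\infty}^{\cdot}\ueb$ coming from Lemmas \ref{lm:cns-1} and \ref{lm:p8}, leaving multiples of $\norm{\ueb}^2_{L^\infty}$, a Gronwall term $\int_0^t\beta\norm{\px\ueb(s,\cdot)}^2_{L^2(\R)}\ds$, and pure $C(T)$ contributions. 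Writing $Y(t)=\beta\norm{\px\ueb(t,\cdot)}^2_{L^2(\R)}$ and using the one dimensional inequality $\norm{\ueb}^2_{L^\infty(\R)}\le 2\norm{\ueb}_{L^2(\R)}\norm{\px\ueb}_{L^2(\R)}\le C(T)\beta^{-1/2}Y^{1/2}$ --- both to bound the $\norm{\ueb}^2_{L^\infty}$ terms and, through $\int_\R\ueb^4\le\norm{\ueb}^2_{L^\infty}\norm{\ueb}^2_{L^2(\R)}$, to recover $Y$ from $\E$ --- the two bounds close together in a bootstrap: absorbing the half power of $Y$ by Young's inequality and applying Gronwall with the weight $e^{2\gamma t}$ yields $Y(t)\le C(T)\beta^{-1}$, a fortiori \eqref{eq:ux2}. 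Since the cubic term is cancelled rather than absorbed, the diffusion $\beta\eps\norm{\pxx\ueb}^2_{L^2(\R)}$ is left untouched on the left-hand side and, under the same weighting, supplies the space--time term of \eqref{eq:ux2}; the same Gagliardo--Nirenberg inequality then gives $\norm{\ueb}_{L^\infty((0,T)\times\R)}\le C(T)\beta^{-1/2}$, i.e. \eqref{eq:l-infty-u}. The delicate point is exactly this simultaneous closure: the quartic corrector is what replaces the uncontrollable cubic term by the mild diffusion term, making the coupled $L^\infty$ and $\beta\norm{\px\ueb}^2$ estimate self-sustaining.
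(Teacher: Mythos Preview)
Your proposal is correct and follows essentially the same route as the paper. Both arguments hinge on testing \eqref{eq:OHepsw} against the combined multiplier $-\beta\pxx\ueb-\tfrac16\ueb^3$ so that the cubic interaction $\tfrac{\beta}{2}\int_\R\ueb^2\px\ueb\pxx\ueb\,dx$ cancels, on testing \eqref{eq:equat-P} against $\gamma(\Peb-\eps\px\Peb)$, and then on closing via Gronwall plus the one--dimensional inequality $\norm{\ueb}_{L^\infty(\R)}^2\le 2\norm{\ueb}_{L^2(\R)}\norm{\px\ueb}_{L^2(\R)}$; your use of $\Peb-\eps\px\Peb=\int_{-\infty}^{x}\ueb\,dy$ and of Young's inequality on $Y^{1/2}$ (in place of the paper's quartic inequality in $\norm{\ueb}_{L^\infty}$) are cosmetic variations of the same computation.
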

\begin{proof}
Let $0\le t\le T$. Multiplying  \eqref{eq:OHepsw} by $\displaystyle -\beta\pxx\ueb -\frac{1}{6} \ueb^3$, we have
\begin{equation}
\label{eq:Ohmp}
\begin{split}
\left(-\beta\pxx\ueb -\frac{1}{6}\ueb^3\right)\pt\ueb &- \frac{1}{6}\left(-\beta\pxx\ueb -\frac{1}{6}\ueb^3\right)\px\ueb^3\\
&-\beta\left(-\beta\pxx\ueb -\frac{1}{6}\ueb^3\right)\pxxx\ueb\\
=&\gamma\left(-\beta\pxx\ueb -\frac{1}{6} \ueb^3\right)\Peb\\
&+\eps\left(-\beta\pxx\ueb -\frac{1}{6}\ueb^3\right)\pxx\ueb.
\end{split}
\end{equation}
Observe that
\begin{align*}
\int_{\R}\left(-\beta\pxx\ueb -\frac{1}{6}\ueb^3\right)\pt\ueb dx &= \frac{d}{dt}\left(\frac{\beta}{2}\norm{\px\ueb(t,\cdot)}^2_{L^2(\R)}-\frac{1}{24}\int_{\R}\ueb^4dx\right),\\
- \frac{1}{6}\int_{\R}\left(-\beta\pxx\ueb -\frac{1}{6}\ueb^3\right)\px \ueb^3 dx &=\frac{\beta}{2}\int_{\R}\ueb^2\px\ueb\pxx\ueb dx,\\
-\beta\int_{\R}\left(-\beta\pxx\ueb -\frac{1}{6}\ueb^3\right)\pxxx\ueb dx &= -\frac{\beta}{2}\int_{\R}\ueb^2\px\ueb\pxx\ueb dx,\\
\eps\int_{\R} \left(-\beta\pxx\ueb -\frac{1}{6}\ueb^3\right)\pxx\ueb dx &=  -\eps\beta \norm{\pxx\ueb(t,\cdot)}^2_{L^2(\R)} + \frac{\eps}{2}\int_{\R}\ueb^2(\px\ueb)^2dx.
\end{align*}
Moreover, for \eqref{eq:P-pxP-intfy-1},
\begin{equation*}
-\gamma\beta\int_{\R}\pxx\ueb\Peb dx = \gamma\beta\int_{\R}\px\ueb\px\Peb dx.
\end{equation*}
Therefore, integrating \eqref{eq:Ohmp} over $\R$,
\begin{equation}
\label{eq:p12}
\begin{split}
&\frac{d}{dt}\left(\beta\norm{\px\ueb(t,\cdot)}^2_{L^2(\R)}-\frac{1}{12}\int_{\R}\ueb^4dx\right) + 2\beta\eps\norm{\pxx\ueb(t,\cdot)}^2_{L^2(\R)}\\
&\quad= 2\gamma\beta\int_{\R}\px\ueb\px\Peb dx -\frac{\gamma}{3}\int_{\R} \ueb^3 \Peb dx+\eps\int_{\R}\ueb^2(\px\ueb)^2dx.
\end{split}
\end{equation}
Multiplying \eqref{eq:equat-P} by $2\gamma\Peb - 2\eps\gamma\px\Peb$, we have
\begin{equation}
\label{eq:P12}
\begin{split}
&(\pt\Peb-\eps\ptx\Peb)(2\gamma\Peb - 2\eps\gamma\px\Peb)\\
&\quad = \gamma\Feb(2\gamma\Peb - 2\eps\gamma\px\Peb)+\frac{1}{6}\ueb^3(2\gamma\Peb - 2\eps\gamma\px\Peb)\\
&\qquad+\beta\pxx\ueb(2\gamma\Peb - 2\eps\gamma\px\Peb)+\eps\px\ueb(2\gamma\Peb - 2\eps\gamma\px\Peb).
\end{split}
\end{equation}
It follows from \eqref{eq:P-pxP-intfy-1} that
\begin{equation}
\begin{split}
\label{eq:P14}
-2\eps\gamma\int_{\R}\Peb\ptx\Peb dx =& 2 \eps \gamma \int_{\R}\px\Peb\pt\Peb dx,\\
2\beta\gamma\int_{\R}\pxx\ueb\Peb dx =& -2\beta\gamma\int_{\R}\px\ueb \px\Peb dx.
\end{split}
\end{equation}
Therefore, arguing as \cite[Lemma $2.6$]{Cd1}, an integration on $\R$ of \eqref{eq:P12}, \eqref{eq:P-pxP-intfy-1} and \eqref{eq:P14} give
\begin{equation}
\label{eq:P15}
\begin{split}
&\frac{d}{dt}\left(\gamma\norm{\Peb(t,\cdot)}^2_{L^2(\R)} + \eps^2\gamma \norm{\px\Peb(t,\cdot)}^2_{L^2(\R)}\right)\\
&\quad =2\gamma^2\int_{\R}\Feb\Peb dx -2\eps\gamma^2\int_{\R} \Feb\px\Peb dx\\
&\qquad +\frac{\gamma}{3}\int_{\R}\ueb^3\Peb dx -\frac{\eps\gamma}{3}\int_{\R}\ueb^3\px\Peb dx\\
&\qquad -2\beta\gamma\int_{\R}\px\ueb\px\Peb dx -2\beta\eps\int_{\R}\pxx\ueb\Peb dx\\
&\qquad + 2\eps\gamma\int_{\R}\px\ueb\Peb dx -2\eps^2\gamma\int_{\R}\px\ueb\px\Peb dx.
\end{split}
\end{equation}
Due to \eqref{eq:Pmedianulla} and \eqref{eq:F1},
\begin{equation}
\label{eq:PF}
\begin{split}
2\gamma^2\int_{\R}\Feb\Peb dx=&2\gamma^2\int_{\R}\Feb\px\Feb dx \\
= &\gamma^2(\Feb(t,\infty))^2=\gamma^2\left(\int_{\R}\Peb(t,x) dx \right)^2=0.
\end{split}
\end{equation}
It follows from \eqref{eq:P15} and \eqref{eq:PF} that
\begin{equation}
\label{eq:P16}
\begin{split}
&\frac{d}{dt}\left(\gamma\norm{\Peb(t,\cdot)}^2_{L^2(\R)} + \eps^2\gamma \norm{\px\Peb(t,\cdot)}^2_{L^2(\R)}\right)\\
&\quad = -2\eps\gamma^2\int_{\R} \Feb\px\Peb dx+\frac{\gamma}{3}\int_{\R}\ueb^3\Peb dx\\
&\qquad  -\frac{\eps\gamma}{3}\int_{\R}\ueb^3\px\Peb dx-2\beta\gamma\int_{\R}\px\ueb\px\Peb dx\\
&\qquad  -2\beta\eps\int_{\R}\px\ueb\px\Peb dx + 2\eps\gamma\int_{\R}\px\ueb\Peb dx\\
&\qquad  -2\eps^2\gamma\int_{\R}\px\ueb\px\Peb dx.
\end{split}
\end{equation}
Adding \eqref{eq:p12} and \eqref{eq:P16}, we get
\begin{equation}
\label{eq:p234}
\begin{split}
&\frac{d}{dt}G_{1}(t) + 2\beta\eps\norm{\pxx\ueb(t,\cdot)}^2_{L^2(\R)}\\
&\quad=  \eps\int_{\R}\ueb^2(\px\ueb)^2dx-2\eps\gamma^2\int_{\R} \Feb\px\Peb dx\\
&\qquad-\frac{\eps\gamma}{3}\int_{\R}\ueb^3\px\Peb dx-2\eps\beta\int_{\R}\pxx\ueb\Peb dx  \\
&\qquad +2\eps\gamma\int_{\R}\px\ueb\Peb dx -2\eps^2\gamma\int_{\R}\px\ueb\px\Peb dx ,
\end{split}
\end{equation}
where
\begin{equation}
\label{eq:def-di-G}
\begin{split}
G_{1}(t)=\beta\norm{\px\ueb(t,\cdot)}^2_{L^2(\R)}&-\frac{1}{12}\int_{\R}\ueb^4dx\\
&+\gamma\norm{\Peb(t,\cdot)}^2_{L^2(\R)} + \eps^2\gamma \norm{\px\Peb(t,\cdot)}^2_{L^2(\R)}.
\end{split}
\end{equation}
Since $0<\eps <1$, thanks to \eqref{eq:P-pxP-intfy-1} and \eqref{eq:F1},
\begin{equation}
\label{eq:F-pxP}
-2\eps\gamma^2\int_{\R} \Feb\px\Peb dx=2\eps\gamma^2 \int_{\R}\px\Feb\Peb dx \le 2\gamma^2\norm{\Peb(t,\cdot)}^2_{L^2(\R)}.
\end{equation}
Since $0<\eps,\beta <1$, due to \eqref{eq:P-pxP-intfy-1}, \eqref{eq:10021-1} and the Young inequality,
\begin{equation}
\label{eq:you1}
\begin{split}
&-2\eps\beta\int_{\R}\pxx\ueb\Peb dx=2\eps\beta\int_{\R}\px\ueb\px\Peb dx\\
&\quad\le 2\eps\beta\left\vert\int_{\R} \px\ueb\px\Peb dx\right\vert\\
&\quad\le 2\eps\int_{\R}\vert\px\ueb\vert\vert\px\Pe\vert dx\\
&\quad\le\eps\norm{\px\ueb(t,\cdot)}^2_{L^2(\R)} +\eps\norm{\Peb(t,\cdot)}^2_{L^2(\R)}\\
&\quad\le \eps\norm{\px\ueb(t,\cdot)}^2_{L^2(\R)} +  \norm{\Peb(t,\cdot)}^2_{L^2(\R)},\\
&\quad\le \eps\norm{\px\ueb(t,\cdot)}^2_{L^2(\R)} + C_{0}e^{2\gamma t}\\
&\quad\le  \eps \norm{\px\ueb(t,\cdot)}^2_{L^2(\R)} + C(T),\\
&2\eps\gamma\int_{\R}\px\ueb\Peb dx  =- 2\eps\gamma\int_{\R}\ueb\px\Peb dx\\
&\quad\le2\eps\gamma\left\vert\int_{\R}\ueb\px\Peb dx \right\vert\\
&\quad\le 2\eps\gamma\int_{\R}\vert\ueb\vert\vert\px\Peb\vert dx\\
&\quad\le  \eps\gamma\norm{\ueb(t,\cdot)}^2_{L^2(\R)} + \eps\gamma\norm{\px\Peb(t,\cdot)}^2_{L^2(\R)}\\
&\quad\le  \gamma\norm{\ueb(t,\cdot)}^2_{L^2(\R)} + \gamma\norm{\px\Peb(t,\cdot)}^2_{L^2(\R)}\\
&\quad\le  \gamma C_{0}e^{2\gamma t} + \gamma C_{0}e^{2\gamma t} \le C(T).
\end{split}
\end{equation}
For the Young inequality,
\begin{equation}
\label{eq:you2}
\begin{split}
2\eps^2\gamma\left\vert\int_{\R}\px\ueb\px\Peb dx\right\vert\le&\eps^2\int_{\R}\vert\px\ueb\vert\vert 2\gamma\px\Peb\vert dx\\
\le& \frac{\eps^2}{2}\norm{\px\ueb(t,\cdot)}^2_{L^2(\R)}+ 2\eps^2\gamma^2\norm{\px\Peb(t,\cdot)}^2_{L^2(\R)}\\
\le& \frac{\eps}{2}\norm{\px\ueb(t,\cdot)}^2_{L^2(\R)}+ 2\eps^2\gamma^2\norm{\px\Peb(t,\cdot)}^2_{L^2(\R)}.
\end{split}
\end{equation}
It follows from \eqref{eq:p234}, \eqref{eq:def-di-G}, \eqref{eq:F-pxP}, \eqref{eq:you1} and \eqref{eq:you2} that
\begin{align*}
&\frac{d}{dt}G_{1}(t) +2\beta\eps\norm{\pxx\ueb(t,\cdot)}^2_{L^2(\R)}\\
&\quad \le 2\gamma^2\norm{\Peb(t,\cdot)}^2_{L^2(\R)}+ 2\gamma^2\eps^2\norm{\px\Peb(t,\cdot)}^2_{L^2(\R)}\\
&\qquad +\eps\int_{\R}\ueb^2(\px\ueb)^2dx +\frac{\eps\gamma}{3}\int_{\R}\vert\ueb\vert^3\vert\vert\px\Peb\vert dx\\
&\qquad +\frac{3\eps}{2}\norm{\px\ueb(t,\cdot)}^2_{L^2(\R)}+C(T)\\
&\quad\le 2\gamma\beta\norm{\px\ueb(t,\cdot)}^2_{L^2(\R)} + 2\gamma^2\norm{\Peb(t,\cdot)}^2_{L^2(\R)}\\
&\qquad + 2\gamma^2\eps^2\norm{\px\Peb(t,\cdot)}^2_{L^2(\R)}+\eps\int_{\R}\ueb^2(\px\ueb)^2dx \\
&\qquad +\frac{\eps\gamma}{3}\int_{\R}\vert\ueb\vert^3\vert\vert\px\Peb\vert dx - \frac{\gamma}{6}\int_{\R}\ueb^4dx\\
&\qquad + \frac{\gamma}{6}\int_{\R}\ueb^4dx + \frac{3\eps}{2} \norm{\px\ueb(t,\cdot)}^2_{L^2(\R)}+C(T)\\
&\quad = 2\gamma G_{1}(t) +\eps\int_{\R}\ueb^2(\px\ueb)^2dx +\frac{\eps\gamma}{3}\int_{\R}\vert\ueb\vert^3\vert\vert\px\Peb\vert dx\\
&\qquad+ \frac{\gamma}{6}\int_{\R}\ueb^4dx + \frac{3\eps}{2} \norm{\px\ueb(t,\cdot)}^2_{L^2(\R)}+C(T),
\end{align*}
that is
\begin{equation}
\label{eq:P123}
\begin{split}
\frac{d}{dt}G_{1}(t) &- 2\gamma  G_{1}(t) +2\beta\eps\norm{\pxx\ueb(t,\cdot)}^2_{L^2(\R)}\\
\le & \eps\int_{\R}\ueb^2(\px\ueb)^2dx +\frac{\eps\gamma}{3}\int_{\R}\vert\ueb\vert^3\vert\vert\px\Peb\vert dx\\
&+\frac{\gamma}{6}\int_{\R}\ueb^4dx + \frac{3\eps}{2} \norm{\px\ueb(t,\cdot)}^2_{L^2(\R)} +C(T).
\end{split}
\end{equation}
Since $0<\eps<1$, due to \eqref{eq:stima-l-2-1}, \eqref{eq:10021-1} and the Young inequality,
\begin{equation}
\label{eq:you4}
\begin{split}
&\frac{\eps\gamma}{3}\int_{\R}\vert\ueb\vert^3\vert\vert\px\Peb\vert dx\le \frac{\eps\gamma}{3} \int_{\R}\ueb^2(\px\Peb)^2dx + \frac{\eps\gamma}{3}\int_{\R} \ueb^4 dx\\
&\quad \le \frac{\gamma}{3}\eps \norm{\px\Peb(t,\cdot)}^2_{L^{\infty}(\R)}\norm{\ueb(t,\cdot)}^2_{L^2(\R)} + \frac{\gamma}{3}\norm{\ueb}^2_{L^{\infty}((0,T)\times\R)}\norm{\ueb(t,\cdot)}^2_{L^2(\R)}\\
&\quad \le \frac{\gamma}{3}C_{0}e^{4\gamma t} + \frac{\gamma}{3} C_{0}e^{2\gamma t}\norm{\ueb}^2_{L^{\infty}((0,T)\times\R)}\le C(T)+C(T)\norm{\ueb}^2_{L^{\infty}((0,T)\times\R)}.
\end{split}
\end{equation}
Thanks to \eqref{eq:stima-l-2-1},
\begin{equation}
\label{eq:u-4}
\begin{split}
\frac{\gamma}{6}\int_{\R}\ueb^4dx \le&\frac{\gamma}{6}\norm{\ueb}^2_{L^{\infty}((0,T)\times\R)}\norm{\ueb(t,\cdot)}^2_{L^2(\R)}\\
\le&\frac{\gamma}{6}C_{0}e^{2\gamma t}\norm{\ueb}^2_{L^{\infty}((0,T)\times\R)}\le C(T)\norm{\ueb}^2_{L^{\infty}((0,T)\times\R)}.
\end{split}
\end{equation}
\eqref{eq:P123}, \eqref{eq:you4} and \eqref{eq:u-4} give
\begin{equation}
\label{eq:u-5}
\begin{split}
\frac{d}{dt}G_{1}(t) &-2\gamma G_{1}(t) +2\beta\eps\norm{\pxx\ueb(t,\cdot)}^2_{L^2(\R)}\\
\le & C(T)+ C(T)\norm{\ueb}^2_{L^{\infty}((0,T)\times\R)}+ \eps\norm{\ueb}^2_{L^{\infty}((0,T)\times\R)}\norm{\px\ueb(t,\cdot)}^2_{L^2(\R)}\\
&+ \frac{3\eps}{2} \norm{\px\ueb(t,\cdot)}^2_{L^2(\R)}.
\end{split}
\end{equation}
It follows from \eqref{eq:u0eps}, \eqref{eq:stima-l-2-1}, \eqref{eq:def-di-G}, \eqref{eq:u-5} and Gronwall's Lemma that
\begin{equation}
\label{eq:u-6}
\begin{split}
&\beta\norm{\px\ueb(t,\cdot)}^2_{L^2(\R)} +\gamma\norm{\Peb(t,\cdot)}^2_{L^2(\R)} \\
&\qquad + \eps^2\gamma \norm{\px\Peb(t,\cdot)}^2_{L^2(\R)} + 2\beta\eps e^{2\gamma t}\int_{0}^{t}e^{-2 C\gamma s}\norm{\pxx\ueb(s,\cdot)}^2_{L^2(\R)}ds\\
&\quad \le C_{0}e^{2\gamma t} + C(T)e^{2 \gamma t}\int_{0}^{t}e^{-2 \gamma s}ds +e^{2\gamma t}C(T)\norm{\ueb}^2_{L^{\infty}((0,T)\times\R)}\int_{0}^{t}e^{-2 \gamma s}ds\\
&\qquad+ \eps e^{2 \gamma t}\norm{\ueb}^2_{L^{\infty}((0,T)\times\R)}\int_{0}^{t}e^{-2\gamma s}ds \norm{\px\ueb(s,\cdot)}^2_{L^2(\R)}ds\\
&\qquad +\frac{3\eps}{2}e^{2\gamma t}\int_{0}^{t} e^{-2\gamma s}\norm{\px\ueb(s,\cdot)}^2_{L^2(\R)} ds +\frac{1}{12}\int_{\R}\ueb^4dx\\
&\quad\le C(T) +C(T)\norm{\ueb}^2_{L^{\infty}((0,T)\times\R)}+\frac{C_{0}e^{2\gamma t}}{2}\norm{\ueb}^2_{L^{\infty}((0,T)\times\R)}\\
&\qquad 3C_{0}e^{2\gamma t} +\frac{1}{12}\norm{\ueb}^2_{L^{\infty}((0,T)\times\R)}\norm{\ueb(t,\cdot)}^2_{L^2(\R)}\\
&\quad \le C(T) + C(T)\norm{\ueb}^2_{L^{\infty}((0,T)\times\R)} +\frac{C_{0}e^{2\gamma t}}{12}\norm{\ueb}^2_{L^{\infty}((0,T)\times\R)}\\
&\quad \le C(T)\left(1+\norm{\ueb}^2_{L^{\infty}((0,T)\times\R)}\right).
\end{split}
\end{equation}
Following \cite{Cd2, CK, LN}, we begin by observing that, due to \eqref{eq:stima-l-2-1}, \eqref{eq:u-6} and the H\"older inequality,
\begin{align*}
\ueb^2(t,x)&=2\int_{-\infty}^{x}\ueb\px\ueb dy\le 2\int_{\R}\vert\ueb\px\ueb\vert dx\\
&\le\frac{2}{\sqrt{\beta}}\norm{\ueb}_{L^2(\R)}\sqrt{\beta}\norm{\px\ueb(t,\cdot)}_{L^2(\R)}\\
&\le\frac{2}{\sqrt{\beta}}C_{0}e^{\gamma t}\sqrt{ C(T)\left(1+\norm{\ueb}^2_{L^{\infty}((0,T)\times\R)}\right)}\\
&\le \frac{C(T)}{\sqrt{\beta}}\sqrt{\left(1+\norm{\ueb}^2_{L^{\infty}((0,T)\times\R)}\right)},
\end{align*}
that is
\begin{equation}
\label{eq:quarto-grado}
\norm{\ueb}^4_{L^{\infty}((0,T)\times\R)} \le \frac{C(T)}{\beta}\left( 1+\norm{\ueb}^2_{L^{\infty}((0,T)\times\R)}\right).
\end{equation}
Let us show that \eqref{eq:quarto-grado} is verified when
\begin{equation}
\label{eq:sol-eq-quarto}
\norm{\ueb}_{L^{\infty}((0,T)\times\R)}\le \max\left\{1, \left(\frac{2C(T)}{\beta}\right)^{\frac{1}{2}}\right\}\le C(T)\beta^{-\frac{1}{2}}.
\end{equation}
We write $y= \norm{\ueb}_{L^{\infty}((0,T)\times\R)}$. Therefore, it follows from \eqref{eq:quarto-grado} that
\begin{equation}
\label{eq:equat-in-y}
y^4 \le  \frac{C(T)}{\beta}\left( 1+y^2 \right).
\end{equation}
If $y\le 1$, we have \eqref{eq:sol-eq-quarto}.

If $y>1$ and
\begin{equation*}
y>\left(\frac{2C(T)}{\beta}\right)^{\frac{1}{2}},
\end{equation*}
that is
\begin{equation}
\label{eq:potenza}
y^2>\frac{2C(T)}{\beta},  \quad y^4>\frac{2C(T)}{\beta}y^2.
\end{equation}
It follows from \eqref{eq:potenza} that
\begin{equation*}
2y^4>y^4+y^2>\frac{2C(T)}{\beta}(y^2+1).
\end{equation*}
Therefore,
\begin{equation*}
y^4>\frac{C(T)}{\beta}(y^2+1),
\end{equation*}
which is in contradiction with \eqref{eq:equat-in-y}.

Finally, \eqref{eq:ux2} follows from \eqref{eq:l-infty-u} and \eqref{eq:u-6}.
\end{proof}

\begin{lemma}\label{lm:stima-l-6}
Let  $T>0$. Assume \eqref{eq:beta-eps} holds true. There exists a function $C(T)>0$, independent on $\eps$ and $\beta$, such that
\begin{equation}
\label{eq:P-l-infty}
\norm{\Peb}_{L^{\infty}((0,T)\times\R)}\le C(T).
\end{equation}
In particular, we have that
\begin{align}
\label{eq:P-in-l2}
\norm{\Peb(t,\cdot)}_{L^{2}(\R)}\le & C(T),\\
\label{pxP-in-l2-1}
\eps\norm{\px\Peb(t,\cdot)}_{L^{2}(\R)}\le & C(T),\\
\label{eq:u-in-l-6}
\norm{\ueb(t,\cdot)}_{L^{6}(\R)}\le & C(T),\\
\label{eq:eps-px-u-l-2}
\eps\norm{\px\ueb(t,\cdot)}_{L^{2}(\R)}\le & C(T),\\
\label{eq:p001}
\eps e^{t}\int_{0}^{t}\!\!\!\int_{\R}e^{-s}\ueb^4(s,\cdot)(\px\ueb(s,\cdot))^2 dsdx\le &C(T),\\
\label{eq:p002}
\eps^3 e^{t}\int_{0}^{t} e^{-s} \norm{\pxx\ueb(s,\cdot)}^2_{L^2(\R)}ds \le & C(T),
\end{align}
for every $0<t<T$. Moreover,
\begin{align}
\label{eq:00010}
\beta\norm{\px\ueb\pxx\ueb}_{L^{1}((0,T)\times\R)} \le &C(T),\\
\label{eq:defuxx}
\beta^2\int_{0}^{T}\norm{\pxx\ueb(s,\cdot)}^2_{L^2(\R)}ds \le& C(T)\eps.
\end{align}
\end{lemma}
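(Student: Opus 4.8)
The plan is to close all of the stated bounds through a single coupled energy inequality for the \emph{nonnegative} functional
\[
G_2(t)=\frac16\int_\R\ueb^6\,dx+\frac{\eps^2}{2}\norm{\px\ueb(t,\cdot)}^2_{L^2(\R)}+\gamma\norm{\Peb(t,\cdot)}^2_{L^2(\R)}+\eps^2\gamma\norm{\px\Peb(t,\cdot)}^2_{L^2(\R)},
\]
and then to read off the remaining estimates as consequences. First I would multiply the first equation of \eqref{eq:OHepsw} by $\ueb^5$ and integrate over $\R$; using \eqref{eq:P-pxP-intfy-1} the nonlinear flux integrates to zero, the diffusion produces the dissipation $5\eps\int_\R\ueb^4(\px\ueb)^2\,dx$, and there remain the coupling $\gamma\int_\R\Peb\ueb^5\,dx$ and the dispersive contribution $-5\beta\int_\R\ueb^4\px\ueb\pxx\ueb\,dx$. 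Next I would test the same equation with $-\eps^2\pxx\ueb$; after integrating by parts this yields the second dissipation $\eps^3\norm{\pxx\ueb(t,\cdot)}^2_{L^2(\R)}$ (the dispersive piece $-\eps^2\beta\int\pxx\ueb\pxxx\ueb\,dx$ vanishing), a lower-order cubic term $\tfrac{\eps^2}{2}\int_\R\ueb(\px\ueb)^3\,dx$, and the harmless $\eps^2\gamma\int_\R\px\ueb\px\Peb\,dx$. For the $\Peb$-part I would reuse the computation leading to \eqref{eq:P16}, multiplying \eqref{eq:equat-P} by $2\gamma\Peb-2\eps\gamma\px\Peb$ and invoking \eqref{eq:P14} and \eqref{eq:PF}. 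Summing the three identities produces a differential inequality of the shape $\frac{d}{dt}G_2+5\eps\int_\R\ueb^4(\px\ueb)^2\,dx+\eps^3\norm{\pxx\ueb}^2_{L^2(\R)}\le(\text{controllable terms})$.

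The controllable terms fall into three groups. The coupling $\gamma\int_\R\Peb\ueb^5\,dx$ I would bound \emph{linearly} in $G_2$: by \eqref{eq:10021-1} we already know $\norm{\px\Peb(t,\cdot)}_{L^2(\R)}\le C_0e^{\gamma t}$, hence $\norm{\Peb}^2_{L^\infty(\R)}\le2\norm{\Peb}_{L^2(\R)}\norm{\px\Peb}_{L^2(\R)}\le C(T)\norm{\Peb}_{L^2(\R)}\lesssim G_2^{1/2}$, while interpolation with \eqref{eq:stima-l-2-1} gives $\int_\R|\ueb|^5\,dx\le\norm{\ueb}^{1/2}_{L^2(\R)}\norm{\ueb}^{9/2}_{L^6(\R)}\lesssim G_2^{3/4}$, so the product is $\le C(T)G_2$. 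The cubic term $\tfrac{\eps^2}{2}\int_\R\ueb(\px\ueb)^3\,dx=-\tfrac{\eps^2}{2}\int_\R\ueb^2\px\ueb\pxx\ueb\,dx$ is reabsorbed by Young's inequality into $\eps\int\ueb^4(\px\ueb)^2$ and $\eps^3\norm{\pxx\ueb}^2_{L^2(\R)}$. The remaining $\Peb$-terms are treated exactly as in \eqref{eq:F-pxP}, \eqref{eq:you1} and \eqref{eq:you2}, contributing $C(T)+C(T)G_2$.

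The main obstacle is the dispersive term $-5\beta\int_\R\ueb^4\px\ueb\pxx\ueb\,dx=10\beta\int_\R\ueb^3(\px\ueb)^3\,dx$, which is genuinely cubic in $\px\ueb$ and cannot be handled by the $L^2$-energy; this is exactly where the scaling hypothesis \eqref{eq:beta-eps} is indispensable. I would apply Young's inequality in the form $5\beta\,|\ueb^4\px\ueb\pxx\ueb|\le\tfrac{\eps^3}{2}(\pxx\ueb)^2+\tfrac{25\beta^2}{2\eps^3}\ueb^8(\px\ueb)^2$, so that the first summand is swallowed by $\eps^3\norm{\pxx\ueb}^2_{L^2(\R)}$, and then exploit $\beta=\mathcal O(\eps^2)$, which forces $\beta^2/\eps^3=\mathcal O(\eps)$, to absorb the second summand into $\eps\int\ueb^4(\px\ueb)^2$. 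Distributing the surplus power of $\ueb$ so that the smallness coming from $\beta=\mathcal O(\eps^2)$ survives the reabsorption is the delicate point of the whole argument; once it is carried out, the inequality reduces to $\frac{d}{dt}G_2\le 2\gamma G_2+C(T)$, and Gronwall's Lemma gives $G_2(t)\le C(T)$. This yields at once \eqref{eq:P-in-l2}, \eqref{pxP-in-l2-1}, \eqref{eq:u-in-l-6} and \eqref{eq:eps-px-u-l-2}, while the two reabsorbed dissipations, kept on the left with the Gronwall weight, furnish \eqref{eq:p001} and \eqref{eq:p002}.

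Finally, \eqref{eq:P-l-infty} follows from $\norm{\Peb}^2_{L^\infty(\R)}\le2\norm{\Peb}_{L^2(\R)}\norm{\px\Peb}_{L^2(\R)}$ combined with \eqref{eq:P-in-l2} and \eqref{eq:10021-1}. The last two estimates are then immediate consequences of the scaling. By Cauchy--Schwarz, \eqref{eq:stima-l-2-1} and \eqref{eq:p002},
\[
\beta\norm{\px\ueb\pxx\ueb}_{L^1((0,T)\times\R)}\le\beta\left(\int_0^T\norm{\px\ueb(s,\cdot)}^2_{L^2(\R)}ds\right)^{1/2}\left(\int_0^T\norm{\pxx\ueb(s,\cdot)}^2_{L^2(\R)}ds\right)^{1/2}\le C(T)\frac{\beta}{\eps^2}\le C(T),
\]
which is \eqref{eq:00010}; and \eqref{eq:defuxx} is just $\beta^2\int_0^T\norm{\pxx\ueb(s,\cdot)}^2_{L^2(\R)}ds\le C(T)\beta^2\eps^{-3}\le C(T)\eps$, again using \eqref{eq:p002} and $\beta=\mathcal O(\eps^2)$.
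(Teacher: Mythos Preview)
Your strategy differs from the paper's: you couple the $L^6$-energy of $\ueb$ with the $\Peb$-energy into a single nonnegative functional $G_2$ and aim for a \emph{linear} Gronwall inequality, whereas the paper keeps $G_2(t)=\tfrac16\norm{\ueb(t,\cdot)}_{L^6(\R)}^6+\tfrac{3\eps^2}{2}\norm{\px\ueb(t,\cdot)}_{L^2(\R)}^2$ and the $\Peb$-energy separate, obtains each in terms of $\norm{\Peb}_{L^\infty((0,T)\times\R)}^4$, and closes by a polynomial inequality for $\norm{\Peb}_{L^\infty}$ after a suitable choice of an auxiliary constant $A(T)$. Your treatment of $\gamma\int_\R\Peb\ueb^5\,dx$ through $\norm{\Peb}_{L^\infty(\R)}\le C(T)G_2^{1/4}$ (using \eqref{eq:10021-1}) and $\int_\R|\ueb|^5\,dx\le C(T)G_2^{3/4}$ is cleaner than the paper's iterated Young argument, and your derivations of \eqref{eq:00010} and \eqref{eq:defuxx} at the end match the paper's.

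The genuine gap is in the dispersive term. Your Young splitting leaves $\tfrac{25\beta^2}{2\eps^3}\int_\R\ueb^8(\px\ueb)^2\,dx\le C_0\eps\int_\R\ueb^8(\px\ueb)^2\,dx$, which you propose to absorb into $5\eps\int_\R\ueb^4(\px\ueb)^2\,dx$; but that would require a uniform bound $\ueb^4\le c$, i.e.\ an estimate on $\norm{\ueb}_{L^\infty}$ independent of $\eps,\beta$, and this is precisely what is \emph{not} available. Invoking Lemma~\ref{lm:u-infty} gives only $\norm{\ueb}_{L^\infty}^4\le C(T)\beta^{-2}$, hence a coefficient $C(T)\eps\beta^{-2}$ in front of $\int_\R\ueb^4(\px\ueb)^2\,dx$, and $\eps\beta^{-2}$ is unbounded under \eqref{eq:beta-eps}; bootstrapping from $G_2$ itself yields $\norm{\ueb}_{L^\infty}^4\le C(T)\eps^{-1}G_2$, which is circular. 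This is not a technicality that can simply be ``carried out'': the splitting you chose loses too many powers of $\eps$. The paper proceeds differently here: it pulls out one factor $\norm{\ueb}_{L^\infty}\le C(T)\beta^{-1/2}$ from Lemma~\ref{lm:u-infty}, so that the prefactor $\beta$ becomes $C(T)\beta^{1/2}\le C_0\eps$, and only then applies Young to land on $\eps C(T)\norm{\px\ueb(t,\cdot)}_{L^2(\R)}^2+\tfrac{\eps}{2}\int_\R\ueb^4(\px\ueb)^2\,dx$. The essential input from Lemma~\ref{lm:u-infty} is missing from your outline, and without it the scaling \eqref{eq:beta-eps} alone does not close the dispersive contribution.
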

\begin{proof}
Let $0\le t \le T$. Multiplying \eqref{eq:OHepsw} by $\ueb^5 -3\eps^2\pxx\ueb$, we have
\begin{equation}
\label{eq:Ohmp-1}
\begin{split}
(\ueb^5 -3\eps^2\pxx\ueb)\pt\ueb &- \frac{1}{6}(\ueb^5 -3\eps^2\pxx\ueb)\px \ueb^3\\
&-(\ueb^5 -3\eps^2\pxx\ueb)\beta\pxxx\ueb\\
=&\gamma(\ueb^5 -3\eps^2\pxx\ueb)\Peb +\eps(\ueb^5 -3\eps^2\pxx\ueb)\pxx\ueb.
\end{split}
\end{equation}
Since
\begin{align*}
\int_{\R}(\ueb^5 -3\eps^2\pxx\ueb)\pt\ueb dx=&\frac{d}{dt}\left(\frac{1}{6}\norm{\ueb(t,\cdot)}^6_{L^6(\R)}+\frac{3\eps^2}{2}\norm{\px\ueb(t,\cdot)}^2_{L^2(\R)}\right),\\
- \frac{1}{6}\int_{\R}(\ueb^5 -3\eps^2\pxx\ueb)\px \ueb^3 dx=&\frac{3\eps^2}{2}\int_{\R}\ueb^2\px\ueb\pxx\ueb dx,\\
-\beta\int_{\R}(\ueb^5 -3\eps^2\pxx\ueb)\pxxx\ueb dx=& 5\beta\int_{\R}\ueb^4\px\ueb\pxx\ueb dx\\
=& -10\beta\int_{\R}\ueb^3(\px\ueb)^2dx,\\
\eps\int_{\R}(\ueb^5 -3\eps^2\pxx\ueb)\pxx\ueb dx = & -5\eps\int_{\R}\ueb^4(\px\ueb)^2 dx -3\eps^3\norm{\pxx\ueb(t,\cdot)}^2_{L^2(\R)},
 \end{align*}
integrating \eqref{eq:Ohmp-1} over $\R$,
\begin{equation}
\label{eq:Rhs1}
\begin{split}
&\frac{d}{dt}\left(\frac{1}{6}\norm{\ueb(t,\cdot)}^6_{L^6(\R)}+\frac{3\eps^2}{2}\norm{\px\ueb(t,\cdot)}^2_{L^2(\R)}\right)\\
&\qquad + 5\eps\int_{\R}\ueb^4(\px\ueb)^2 dx + 3\eps^3\norm{\pxx\ueb(t\cdot)}^2_{L^2(\R)}\\
&\quad = \gamma \int_{\R}\ueb^5 \Peb dx -3\gamma\eps^2\int_{\R}\pxx\ueb\Peb dx\\
&\qquad -\frac{3\eps^2}{2}\int_{\R}\ueb^2\px\ueb\pxx\ueb dx -10\beta\int_{\R}\ueb^3(\px\ueb)^2 dx.
\end{split}
\end{equation}
Due to \eqref{eq:stima-l-2-1} and the Young inequality,
\begin{equation}
\label{eq:you8}
\begin{split}
&\gamma \left \vert\int_{\R}\ueb^5 \Peb dx\right\vert \le \int_{\R}\left\vert \sqrt{6}\gamma\ueb^2\Peb\right\vert \left\vert\frac{\ueb^3}{\sqrt{6}}\right\vert dx\\
&\quad \le 3\gamma^2\int_{\R}\Peb^2\ueb^4 dx + \frac{1}{12}\norm{\ueb(t,\cdot)}^6_{L^{6}(\R)}\\
&\quad \le \int_{\R}\left\vert3\sqrt{6}\gamma^2\Peb^2\ueb\right\vert\left\vert\frac{\ueb^3}{\sqrt{6}}\right\vert dx + \frac{1}{12}\norm{\ueb(t,\cdot)}^6_{L^{6}(\R)}\\
&\quad \le 27\gamma^4\int_{\R}\Peb^4\ueb^2 dx +  \frac{1}{6}\norm{\ueb(t,\cdot)}^6_{L^{6}(\R)}\\
&\quad \le 27\gamma^4\norm{\Peb}^4_{L^{\infty}((0,T)\times\R)}\norm{\ueb(t,\cdot)}^2_{L^{2}(\R)}+  \frac{1}{6}\norm{\ueb(t,\cdot)}^6_{L^{6}(\R)}\\
&\quad \le 27\gamma^4C_{0}e^{2\gamma t}\norm{\Peb}^4_{L^{\infty}((0,T)\times\R)}+  \frac{1}{6}\norm{\ueb(t,\cdot)}^6_{L^{6}(\R)}\\
&\quad \le C(T)\norm{\Peb}^4_{L^{\infty}((0,T)\times\R)}+  \frac{1}{6}\norm{\ueb(t,\cdot)}^6_{L^{6}(\R)}.
\end{split}
\end{equation}
Since $0<\eps<1$, it follows from \eqref{eq:P-pxP-intfy-1}, \eqref{eq:10021-1} and the Young inequality that
\begin{equation}
\label{eq:you9}
\begin{split}
&-3\gamma\eps^2\int_{\R}\pxx\ueb\Peb dx= 3\gamma\eps^2\int_{\R}\px\ueb\px\Peb dx \\
&\quad\le 3\gamma\eps^2\left\vert \int_{\R}\px\ueb\px\Peb dx\right\vert \le 3\eps^2 \int_{\R}\vert \px\ueb\vert \left\vert\gamma\px\Peb\right\vert dx\\
&\quad \le \frac{3\eps^2}{2} \norm{\px\ueb(t,\cdot)}^2_{L^{2}(\R)} +\frac{3\gamma^2\eps^2}{2} \norm{\px\Peb(t,\cdot)}^2_{L^{2}(\R)}\\
&\quad \le \frac{3\eps^2}{2} \norm{\px\ueb(t,\cdot)}^2_{L^{2}(\R)} +\frac{3\gamma^2\eps^2}{2}C_{0}e^{2\gamma t}\\
&\quad \le \frac{3\eps^2}{2} \norm{\px\ueb(t,\cdot)}^2_{L^{2}(\R)}+\frac{3\gamma^2}{2}C_{0}e^{2\gamma t}\\
&\quad \le \frac{3\eps}{2} \norm{\px\ueb(t,\cdot)}^2_{L^{2}(\R)}+C(T).
\end{split}
\end{equation}
Thanks to \eqref{eq:beta-eps}, \eqref{eq:l-infty-u} and the Young inequality,
\begin{equation}
\begin{split}
&10\beta\left\vert\int_{\R}\ueb^3(\px\ueb)^2 dx\right\vert\le 10\beta\int_{\R}\vert\ueb\vert^3 (\px\ueb)^2 dx \\
&\quad \le 10\beta \norm{\ueb}_{L^{\infty}((0,T)\times\R)}\int_{\R}\ueb^2(\px\ueb)^2 dx\le C(T)\beta^{\frac{1}{2}}\int_{\R}\ueb^2(\px\ueb)^2 dx\\
&\quad\le\eps\int_{\R}\vert C_{0}C(T)\px\ueb\vert\ueb^2\vert\px\ueb\vert dx\\
&\quad\le \eps C(T)\norm{\px\ueb(t,\cdot)}^2_{L^2(\R)} + \frac{\eps}{2}\int_{\R}\ueb^4 (\px\ueb)^2 dx.
\end{split}
\end{equation}
For the Young inequality,
\begin{equation}
\label{eq:you10}
\begin{split}
&\frac{3\eps^2}{2}\left\vert\int_{\R}\ueb^2\px\ueb\pxx\ueb dx\right\vert \le \frac{3}{2}\int_{\R}\left\vert\eps^{\frac{1}{2}}\ueb^2\px\ueb\right\vert\left\vert\eps^{\frac{3}{2}}\pxx\ueb\right\vert dx\\
&\quad \le\frac{3\eps}{4}\int_{\R}\ueb^4(\px\ueb)^2dx +\frac{3\eps^3}{4}\norm{\pxx\ueb(t,\cdot)}^2_{L^2(\R)}.
\end{split}
\end{equation}
It follows from \eqref{eq:Rhs1}, \eqref{eq:you8}, \eqref{eq:you9} and \eqref{eq:you10} that
\begin{align*}
&\frac{d}{dt}G_{2}(t)+\frac{15\eps}{4}\int_{\R}\ueb^4(\px\ueb)^2 dx +  \frac{9\eps^3}{4}\norm{\pxx\ueb(t,\cdot)}^2_{L^2(\R)}\\
&\quad \le G_{2}(t) +C(T) + C(T)\norm{\Peb}^4_{L^{\infty}((0,T)\times\R)}+\eps C(T)\norm{\px\ueb(t,\cdot)}^2_{L^2(\R)},
\end{align*}
that is
\begin{equation}
\label{eq:gro1}
\begin{split}
&\frac{d}{dt}G_{2}(t) - G_{2}(t)+\frac{15\eps}{4}\int_{\R}\ueb^4(\px\ueb)^2 dx +  \frac{9\eps^3}{4}\norm{\pxx\ueb(t,\cdot)}^2_{L^2(\R)}\\
&\quad \le C(T) + C(T)\norm{\Peb}^4_{L^{\infty}((0,T)\times\R)}+\eps C(T)\norm{\px\ueb(t,\cdot)}^2_{L^2(\R)},
\end{split}
\end{equation}
where
\begin{equation}
\label{eq:def-di-G1}
G_{2}(t)= \frac{1}{6}\norm{\ueb(t,\cdot)}^6_{L^6(\R)}+\frac{3\eps^2}{2}\norm{\px\ueb(t,\cdot)}^2_{L^2(\R)}.
\end{equation}
The Gronwall Lemma and \eqref{eq:u0eps} give
\begin{equation}
\label{eq:gro2}
\begin{split}
G_{2}(t)&+ \frac{15\eps}{4}e^{t}\int_{0}^{t}\!\!\!\int_{\R}e^{-s}\ueb^4(s,\cdot)(\px\ueb(s,\cdot))^2 dsdx \\
&+  \frac{9\eps^3}{4}e^{t}\int_{0}^{t} e^{-s} \norm{\pxx\ueb(s,\cdot)}^2_{L^2(\R)}ds\\
\le& C_{0}e^{t} + C(T)e^{t}\int_{0}^{t} e^{-s} ds  + C(T)e^{t}\norm{\Peb}^4_{L^{\infty}((0,T)\times\R)}\int_{0}^{t}e^{-s} ds\\
&+\eps C(T)e^{t}\int_{0}^{t}e^{-s} \norm{\px\ueb(s,\cdot)}^2_{L^2(\R)}ds\\
\le& C(T) + C(T)\norm{\Peb}^4_{L^{\infty}((0,T)\times\R)}+\eps C(T)\int_{0}^{t}\norm{\px\ueb(s,\cdot)}^2_{L^2(\R)}ds.
\end{split}
\end{equation}
Due to \eqref{eq:stima-l-2-1},
\begin{equation}
\label{eq:ux-45}
\begin{split}
\eps\int_{0}^{t}\norm{\px\ueb(s,\cdot)}^2_{L^2(\R)}ds\le& \eps e^{2\gamma t}\int_{0}^{t}e^{-2\gamma s}\norm{\px\ueb(s,\cdot)}^2_{L^2(\R)}ds\\
\le& C_{0}e^{2\gamma t} \le C(T).
\end{split}
\end{equation}
Therefore, for \eqref{eq:def-di-G1}, \eqref{eq:gro2} and \eqref{eq:ux-45} we have
\begin{equation}
\label{eq:stima-u-6}
\begin{split}
\frac{1}{6}\norm{\ueb(t,\cdot)}^6_{L^6(\R)}&+\frac{3\eps^2}{2}\norm{\px\ueb(t,\cdot)}^2_{L^2(\R)}\\
&+ \frac{15\eps}{4}e^{t}\int_{0}^{t}\!\!\!\int_{\R}e^{-s}\ueb^4(s,\cdot)(\px\ueb(s,\cdot))^2 dsdx\\
&+  \frac{9\eps^3}{4}e^{t}\int_{0}^{t} e^{-s} \norm{\pxx\ueb(s,\cdot)}^2_{L^2(\R)}ds\\
\le & C(T) + C(T)\norm{\Peb}^4_{L^{\infty}((0,T)\times\R)}.
\end{split}
\end{equation}
Dividing \eqref{eq:P16} by $\gamma$, we get
\begin{equation}
\label{eq:equat-p-12}
\begin{split}
&\frac{d}{dt}\left(\norm{\Peb(t,\cdot)}^2_{L^2(\R)} + \eps^2\norm{\px\Peb(t,\cdot)}^2_{L^2(\R)}\right)\\
&\quad = -2\eps\gamma\int_{\R}\Feb\px\Peb dx +\frac{1}{3}\int_{\R}\ueb^3\Peb dx\\
&\qquad -\frac{\eps}{3}\int_{\R}\ueb^3\px\Peb dx +2\beta\int_{\R}\pxx\ueb\Peb dx\\
&\qquad -2\beta\eps\int_{\R}\pxx\ueb\px\Peb dx +2\eps\int_{\R}\px\ueb\Peb dx\\
&\qquad -2\eps^2\int_{\R}\px\ueb\px\Peb dx.
\end{split}
\end{equation}
Since $0<\eps<1$, due to \eqref{eq:P-pxP-intfy-1} and \eqref{eq:F1},
\begin{equation}
\label{eq:FpxP-1}
-2\eps\gamma\int_{\R}\Feb\px\Peb dx = 2\eps\gamma\int_{\R}\px\Fe\Pe dx = 2\eps \gamma\int_{\R}\Peb^2 dx \le 2\gamma \norm{\Peb(t,\cdot)}^2_{L^2(\R)}.
\end{equation}
It follows from \eqref{eq:beta-eps}, \eqref{eq:P-pxP-intfy-1} and the Young inequality that
\begin{equation}
\label{eq:you15}
\begin{split}
2\beta\int_{\R}\pxx\ueb\Peb dx=& -2\beta\int_{\R}\px\ueb\px\Peb dx\\
\le& 2\beta\left\vert\int_{\R}\px\ueb\px\Peb dx\right\vert\\
\le& \eps^2 \int_{\R} \left\vert\frac{C_{0}\px\ueb}{\sqrt{\gamma}}\right\vert\left\vert 2\sqrt{\gamma}\px\Peb\right\vert dx\\
\le& \frac{C_{0}\eps^2}{2\gamma}\norm{\px\ueb(t,\cdot)}^2_{L^2(\R)} + 2\gamma\eps^2 \norm{\px\Peb(t,\cdot)}^2_{L^2(\R)}\\
\le& \frac{C_{0}\eps}{2\gamma}\norm{\px\ueb(t,\cdot)}^2_{L^2(\R)} + 2\gamma\eps^2 \norm{\px\Peb(t,\cdot)}^2_{L^2(\R)}.
\end{split}
\end{equation}
Thanks to \eqref{eq:beta-eps}, \eqref{eq:P-pxP-intfy-1}, \eqref{eq:10021-1} and the Young inequality,
\begin{equation}
\label{eq:you16}
\begin{split}
-2\beta\eps\int_{\R}\pxx\ueb\px\Peb dx =& 2\beta\eps\int_{\R}\px\ueb\pxx\Peb dx\\
\le & 2\beta\eps \left\vert \int_{\R}\px\ueb\pxx\Peb dx\right\vert\\
\le &2\eps^3\int_{\R}\left\vert C_{0}\px\ueb\right\vert\vert\pxx\Peb\vert dx\\
\le &\eps^3C_{0}\norm{\px\ueb(t,\cdot)}^2_{L^2(\R)}+\eps^3\norm{\pxx\Peb(t,\cdot)}^2_{L^2(\R)}\\
\le &\eps C_{0}\norm{\px\ueb(t,\cdot)}^2_{L^2(\R)}+C_{0}e^{2\gamma t}\\
\le & \eps C_{0}\norm{\px\ueb(t,\cdot)}^2_{L^2(\R)}+C(T).
\end{split}
\end{equation}
It follows from \eqref{eq:P-pxP-intfy-1}, \eqref{eq:stima-l-2-1}, \eqref{eq:10021-1} and the Young inequality that
\begin{equation}
\label{eq:you18}
\begin{split}
&2\eps\int_{\R}\px\ueb\Peb dx -2 \eps^2\int_{\R}\px\ueb\px\Peb dx\\
&\quad  = -2\eps\int_{\R}\ueb\px\Peb dx  + 2\eps^2\int_{\R}\ueb\pxx\Peb dx\\
&\quad \le\left \vert -2\eps\int_{\R}\ueb\px\Peb dx +2 \eps^2\int_{\R}\ueb\pxx\Peb dx\right\vert\\
&\quad \le 2\eps \int_{\R}\vert\ueb\vert \vert \px\Peb \vert dx + 2\eps^2\int_{\R}\vert\ueb\vert\vert\pxx\Peb\vert dx\\
&\quad \le 2\norm{\ueb(t,\cdot)}^2_{L^2(\R)} +\norm{\px\Peb(t,\cdot)}^2_{L^2(\R)}+\eps \norm{\pxx\Peb(t,\cdot)}^2_{L^2(\R)}\\
&\quad \le 3C_{0}e^{2\gamma t}\le C(T).
\end{split}
\end{equation}
Therefore, \eqref{eq:equat-p-12}, \eqref{eq:FpxP-1}, \eqref{eq:you15}, \eqref{eq:you16} and \eqref{eq:you18} give
\begin{equation}
\label{eq:equat-p-13}
\begin{split}
&\frac{d}{dt}\left(\norm{\Peb(t,\cdot)}^2_{L^2(\R)} + \eps^2\norm{\px\Peb(t,\cdot)}^2_{L^2(\R)}\right)\\
&\qquad -2\gamma\left(\norm{\Peb(t,\cdot)}^2_{L^2(\R)} +\eps^2\norm{\px\Peb(t,\cdot)}^2_{L^2(\R)}\right)\\
&\quad \le C(T) + \left(\frac{1}{\gamma}+1\right)C_{0}\eps\norm{\px\ueb(t,\cdot)}^2_{L^2(\R)}\\
&\qquad +\frac{1}{3}\int_{\R}\vert\ueb\vert^3 \vert \Peb \vert dx +\frac{\eps}{3}\int_{\R} \vert\ueb\vert^3 \vert\px\Peb\vert dx.
\end{split}
\end{equation}
Due to \eqref{eq:stima-l-2-1}, \eqref{eq:10021-1} \eqref{eq:stima-u-6} and the Young inequality,
\begin{equation}
\label{eq:you19}
\begin{split}
&\frac{1}{3}\int_{\R}\vert\ueb\vert^3 \vert \Peb \vert dx +\frac{\eps}{3}\int_{\R} \vert\ueb\vert^3 \vert\px\Peb\vert dx\\
&\quad \le \frac{1}{6}\int_{\R}\ueb^2 \Peb^2 dx + \frac{1}{6}\int_{\R} \ueb^4 dx\\
&\qquad +\frac{\eps^2}{6}\int_{\R}\ueb^2(\px\Peb)^2 dx + \frac{1}{6}\int_{\R} \ueb^4 dx\\
&\quad \le \frac{1}{6}\norm{\Peb}^2_{L^{\infty}((0,T)\times\R)}\norm{\ueb(t,\cdot)}^2_{L^2(\R)}+\frac{\eps^2}{6}\norm{\px\Peb(t,\cdot)}^2_{L^{\infty}(\R)}\norm{\ueb(t,\cdot)}^2_{L^2(\R)}\\
&\qquad +\frac{1}{3}\int_{\R}\left\vert\frac{\ueb}{\sqrt{A(T)}}\right\vert \left\vert \sqrt{A(T)} \ueb^3\right \vert dx\\
&\quad \le \frac{C_{0}e^{2\gamma t}}{6}\norm{\Peb}^2_{L^{\infty}((0,T)\times\R)}+\frac{C_{0} e^{4\gamma t}}{6} +\frac{1}{6A(T)}\norm{\ueb(t,\cdot)}^2_{L^2(\R)}\\    &\qquad + \frac{A(T)}{6}\norm{\ueb(t,\cdot)}^6_{L^6(\R)}\\
&\quad \le C(T)\norm{\Peb}^2_{L^{\infty}((0,T)\times\R)} +C(T)+\frac{C_{0}e^{2\gamma t}}{6A(T)} +C(T)A(T)\\
&\qquad + C(T)A(T)\norm{\Peb}^4_{L^{\infty}((0,T)\times\R)}\\
&\quad \le C(T) +\frac{C(T)}{A(T)} +  C(T)\norm{\Peb}^2_{L^{\infty}((0,T)\times\R)} + C(T)A(T) \norm{\Peb}^4_{L^{\infty}((0,T)\times\R)},
\end{split}
\end{equation}
where $A(T)$ is a positive constant which will be specified later.

Thus, \eqref{eq:equat-p-13} and \eqref{eq:you19} give
\begin{align*}
&\frac{d}{dt}\left(\norm{\Peb(t,\cdot)}^2_{L^2(\R)} + \eps^2\norm{\px\Peb(t,\cdot)}^2_{L^2(\R)}\right)\\
&\qquad -2\gamma\left(\norm{\Peb(t,\cdot)}^2_{L^2(\R)} + \eps^2\norm{\px\Peb(t,\cdot)}^2_{L^2(\R)}\right)\\
&\quad \le C(T) + \frac{C(T)}{A(T)}  + \left(\frac{1}{\gamma}+1\right)C_{0} \eps\norm{\px\ueb(t,\cdot)}^2_{L^2(\R)}\\
&\qquad  +  C(T)\norm{\Peb}^2_{L^{\infty}((0,T)\times\R)}+ C(T)A(T) \norm{\Peb}^4_{L^{\infty}((0,T)\times\R)}.
\end{align*}
It follows from \eqref{eq:u0eps}, \eqref{eq:stima-l-2-1} and the Gronwall Lemma that
\begin{align*}
&\norm{\Peb(t,\cdot)}^2_{L^2(\R)} + \eps^2\norm{\px\Peb(t,\cdot)}^2_{L^2(\R)}\\
&\quad \le C_{0}e^{2\gamma t} + C(T)e^{2\gamma t}\int_{0}^{t}e^{-2\gamma s} ds  + \frac{C(T)}{A(T)}e^{2\gamma t}\int_{0}^{t}e^{-2\gamma s} ds\\
&\qquad + \left(\frac{1}{\gamma}+1\right)C_{0}e^{2\gamma t}\eps \int_{0}^{t}e^{-2\gamma s} \norm{\px\ueb(s,\cdot)}^2_{L^2(\R)}\\
&\qquad +C(T)e^{2\gamma t}\norm{\Peb}^2_{L^{\infty}((0,T)\times\R)}\int_{0}^{t} e^{-2\gamma s} ds\\
&\qquad + C(T)A(T) e^{2\gamma t}\norm{\Peb}^4_{L^{\infty}((0,T)\times\R)}\int_{0}^{t} e^{-2\gamma s} ds\\
&\quad \le C(T) +\frac{C(T)}{A(T)}+\left(\frac{1}{\gamma}+1\right)\frac{C_{0}e^{2\gamma t}}{2}+C(T)\norm{\Peb}^2_{L^{\infty}((0,T)\times\R)}\\
&\qquad +C(T) A(T) \norm{\Peb}^4_{L^{\infty}((0,T)\times\R)},
\end{align*}
that is
\begin{equation}
\label{eq:Pl2}
\begin{split}
&\norm{\Peb(t,\cdot)}^2_{L^2(\R)} + \eps^2\norm{\px\Peb(t,\cdot)}^2_{L^2(\R)}\\
&\quad \le C(T)\left ( 1+\frac{1}{A(T)} +\norm{\Peb}^2_{L^{\infty}((0,T)\times\R)}+  A(T) \norm{\Peb}^4_{L^{\infty}((0,T)\times\R)}\right).
\end{split}
\end{equation}
We prove \eqref{eq:P-l-infty}. Due to  the H\"older inequality,
\begin{equation*}
\Peb^2(t,x)\le 2\int_{\R}\vert\Peb\px\Peb\vert dx \le \norm{\Peb(t,\cdot)}_{L^2(\R)}\norm{\px\Peb(t,\cdot)}_{L^2(\R)},
\end{equation*}
that is
\begin{equation*}
\Peb^4(t,x)\le 4 \norm{\Peb(t,\cdot)}^2_{L^2(\R)}\norm{\px\Peb(t,\cdot)}^2_{L^2(\R)}.
\end{equation*}
For \eqref{eq:10021-1} and  \eqref{eq:Pl2},
\begin{equation*}
\Peb^4(t,x)\le C(T)\left ( 1+\frac{1}{A(T)} +\norm{\Peb}^2_{L^{\infty}((0,T)\times\R)}+  A(T) \norm{\Peb}^4_{L^{\infty}((0,T)\times\R)}\right).
\end{equation*}
Therefore,
\begin{equation*}
\label{eq:P-quarto}
\left(1-C(T)A(T)\right)\norm{\Peb}^4_{L^{\infty}((0,T)\times\R)}-C(T)\norm{\Peb}^2_{L^{\infty}((0,T)\times\R)}-C(T) -\frac{C(T)}{A(T)}\le 0.
\end{equation*}
Choosing
\begin{equation}
\label{eq:def-di-A}
A(T)=\frac{1}{2C(T)},
\end{equation}
we have that
\begin{equation*}
\frac{1}{2}\norm{\Peb}^4_{L^{\infty}((0,T)\times\R)}-C(T)\norm{\Peb}^2_{L^{\infty}((0,T)\times\R)} -C(T) \le 0,
\end{equation*}
which gives \eqref{eq:P-l-infty}.
Therefore, \eqref{eq:P-l-infty} and \eqref{eq:def-di-A} give \eqref{eq:P-in-l2} and \eqref{pxP-in-l2-1}, while \eqref{eq:u-in-l-6}, \eqref{eq:eps-px-u-l-2}, \eqref{eq:p001} and \eqref{eq:p002} follow from \eqref{eq:P-l-infty}.

We show that \eqref{eq:defuxx} holds true. We begin by observing that, for \eqref{eq:p002},
\begin{equation*}
\eps^3\int_{0}^{t}\norm{\pxx\ueb(s,\cdot)}^2_{L^2(\R)}ds \le \eps^3 e^{t}\int_{0}^{t}e^{-s}\norm{\pxx\ueb(s,\cdot)}^2_{L^2(\R)}ds\le C(T).
\end{equation*}
Therefore,
\begin{equation}
\label{eq:0005}
\eps^3\int_{0}^{T}\norm{\pxx\ueb(t,\cdot)}^2_{L^2(\R)}dt \le C(T).
\end{equation}
Moreover, for \eqref{eq:ux-45},
\begin{equation}
\label{eq:0006}
\eps\int_{0}^{T}\norm{\px\ueb(t,\cdot)}^2_{L^2(\R)}dt \le C(T).
\end{equation}
Thus, thanks to  \eqref{eq:beta-eps}, \eqref{eq:0005}, \eqref{eq:0006} and H\"older inequality,
\begin{align*}
&\beta\int_{0}^{T}\!\!\!\int_{\R}\vert\px\ueb\pxx\ueb\vert dsdx =\frac{\beta}{\eps^2}\int_{0}^{T}\!\!\!\int_{\R}\eps^{\frac{1}{2}}\vert\px\ueb\vert\eps^{\frac{3}{2}}\vert\pxx\ueb\vert dx\\
&\quad \le \frac{\beta}{\eps^2} \left(\eps \int_{0}^{T}\!\!\!\int_{\R}(\px\ueb)^2 dsdx\right)^{\frac{1}{2}}\left(\eps^3 \int_{0}^{T}\!\!\!\int_{\R}(\pxx\ueb)^2 dsdx\right)^{\frac{1}{2}}\\
&\quad \le C(T)\frac{\beta}{\eps^2}\le C(T),
\end{align*}
that is \eqref{eq:00010}.

Finally, we prove \eqref{eq:defuxx}. Due to \eqref{eq:beta-eps} and \eqref{eq:0005}, we have
\begin{align*}
\beta^2\int_{0}^{T}\norm{\pxx\ueb(s,\cdot)}^2_{L^2(\R)}ds \le C_{0}^2\eps^4\int_{0}^{T}\norm{\pxx\ueb(s,\cdot)}^2_{L^2(\R)}ds\le C(T)\eps,
\end{align*}
which gives \eqref{eq:defuxx}.
\end{proof}

\section{Proof of Theorem \ref{th:main}}
\label{sec:theor}
In this section, we prove Theorem \ref{th:main}. The following technical lemma is needed  \cite{Murat:Hneg}.
\begin{lemma}
\label{lm:1}
Let $\Omega$ be a bounded open subset of $
\R^2$. Suppose that the sequence $\{\mathcal
L_{n}\}_{n\in\mathbb{N}}$ of distributions is bounded in
$W^{-1,\infty}(\Omega)$. Suppose also that
\begin{equation*}
\mathcal L_{n}=\mathcal L_{1,n}+\mathcal L_{2,n},
\end{equation*}
where $\{\mathcal L_{1,n}\}_{n\in\mathbb{N}}$ lies in a
compact subset of $H^{-1}_{loc}(\Omega)$ and
$\{\mathcal L_{2,n}\}_{n\in\mathbb{N}}$ lies in a
bounded subset of $\mathcal{M}_{loc}(\Omega)$. Then $\{\mathcal
L_{n}\}_{n\in\mathbb{N}}$ lies in a compact subset of $H^{-1}_{loc}(\Omega)$.
\end{lemma}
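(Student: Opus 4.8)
The plan is to prove Lemma~\ref{lm:1} by exploiting that the ambient dimension is $2$: this is precisely the threshold for which bounded families of measures are precompact in $W^{-1,q}_{\mathrm{loc}}(\Omega)$ for every exponent $q$ strictly below the value $q=2$ attached to $H^{-1}$. The idea is to show that $\{\mathcal{L}_n\}_n$ is precompact in some $W^{-1,q}_{\mathrm{loc}}$ with $q<2$, to upgrade the $W^{-1,\infty}$ bound into a bound in $W^{-1,r}_{\mathrm{loc}}$ for every finite $r$, and then to squeeze $H^{-1}=W^{-1,2}$ in between the two by interpolation. All statements are local, so I fix an open set $\Omega'$ with $\overline{\Omega'}\subset\Omega$ and argue on $\Omega'$, using a cutoff $\test\in C^\infty_c(\Omega)$ with $\test\equiv 1$ on $\Omega'$ to pass from the local spaces to genuine dual norms; multiplication by $\test$ being bounded on all the Sobolev spaces involved, this reduction is routine.

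First I would record two duality facts on $\Omega'$. Since $N=2$, the Sobolev embedding $W^{1,s}_0(\Omega')\hookrightarrow C_0(\overline{\Omega'})$ holds for every $s>2$ and is compact; taking adjoints, the natural inclusion of $\mathcal{M}(\overline{\Omega'})=C_0(\overline{\Omega'})^\ast$ into $W^{-1,q}(\Omega')$, with $q=s/(s-1)\in(1,2)$, is a compact operator. Hence the hypothesis that $\{\mathcal{L}_{2,n}\}_n$ is bounded in $\CMloc(\Omega)$ gives that it is precompact in $W^{-1,q}_{\mathrm{loc}}(\Omega)$ for every $1\le q<2$. Second, because $\Omega'$ is bounded, $L^{r'}(\Omega')\hookrightarrow L^1(\Omega')$ for every $r\ge 1$, so $W^{1,r'}_0(\Omega')\hookrightarrow W^{1,1}_0(\Omega')$ continuously, and dualizing the identities $W^{-1,\infty}=(W^{1,1}_0)^\ast$, $W^{-1,r}=(W^{1,r'}_0)^\ast$ shows that the bound on $\{\mathcal{L}_n\}_n$ in $W^{-1,\infty}(\Omega)$ produces a bound in $W^{-1,r}_{\mathrm{loc}}(\Omega)$ for every finite $r$.

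Next I would combine these. Fix $q\in(1,2)$. As $\{\mathcal{L}_{1,n}\}_n$ is precompact in $H^{-1}_{\mathrm{loc}}(\Omega)=W^{-1,2}_{\mathrm{loc}}(\Omega)$, hence in $W^{-1,q}_{\mathrm{loc}}(\Omega)$ on the bounded set $\Omega'$, and $\{\mathcal{L}_{2,n}\}_n$ is precompact in $W^{-1,q}_{\mathrm{loc}}(\Omega)$ by the previous step, the sum $\{\mathcal{L}_n\}_n$ is precompact in $W^{-1,q}_{\mathrm{loc}}(\Omega)$. Given an arbitrary subsequence of $\{\mathcal{L}_n\}_n$, I extract a further subsequence, still written $\{\mathcal{L}_{n_k}\}_k$, converging in $W^{-1,q}_{\mathrm{loc}}(\Omega)$; then the differences $g_{k,j}:=\mathcal{L}_{n_k}-\mathcal{L}_{n_j}$ satisfy $g_{k,j}\to 0$ in $W^{-1,q}(\Omega')$ as $k,j\to\infty$, while remaining bounded in $W^{-1,r}(\Omega')$ for each finite $r$ by the second duality fact.

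Finally I would interpolate. Choosing $r>2$ and $\theta\in(0,1)$ with $\tfrac12=\tfrac{1-\theta}{q}+\tfrac{\theta}{r}$, the multiplicative inequality for negative Sobolev spaces yields
\begin{equation*}
\norm{g_{k,j}}_{W^{-1,2}(\Omega')}\le C\,\norm{g_{k,j}}_{W^{-1,q}(\Omega')}^{\,1-\theta}\,\norm{g_{k,j}}_{W^{-1,r}(\Omega')}^{\,\theta}\longrightarrow 0,
\end{equation*}
so $\{\mathcal{L}_{n_k}\}_k$ is Cauchy, hence convergent, in $H^{-1}(\Omega')$. Since $\Omega'$ was arbitrary and every subsequence admits a further subsequence converging in $H^{-1}_{\mathrm{loc}}(\Omega)$, the family $\{\mathcal{L}_n\}_n$ is precompact there. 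The main obstacle is this interpolation step: one must justify the inequality for the negative-order spaces, which is cleanest by extending $\test g_{k,j}$ to $\R^2$ and using the Bessel-potential characterization $\norm{\cdot}_{W^{-1,p}}\approx\norm{(1-\Delta)^{-1/2}\,\cdot}_{L^p}$, so that the estimate reduces to the elementary bound $\norm{h}_{L^2}\le\norm{h}_{L^q}^{1-\theta}\norm{h}_{L^r}^{\theta}$. It is exactly here that $N=2$ enters decisively, since only then does $\CMloc$ reach the exponent $2^-$ needed to bracket $H^{-1}$ from below.
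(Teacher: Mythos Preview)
The paper does not supply its own proof of this lemma; it is quoted from Murat \cite{Murat:Hneg}. Your argument is correct and is essentially Murat's: compactness of the measure part in $W^{-1,q}_{\mathrm{loc}}$ for $q<2$ (dual to the compact embedding $W^{1,q'}_0\hookrightarrow C_0$), precompactness of the full sequence in $W^{-1,q}_{\mathrm{loc}}$, then interpolation against the $W^{-1,\infty}$ bound to reach $H^{-1}$.

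One small correction to your closing remark: the dimension $N=2$ is not decisive. In $\R^N$ one has $\CMloc\hookrightarrow W^{-1,q}_{\mathrm{loc}}$ compactly for every $q<N/(N-1)$, and since you interpolate against $W^{-1,r}$ for $r$ arbitrarily large (supplied by the $W^{-1,\infty}$ hypothesis), any exponent $q>1$ is enough to hit $2$. Murat's lemma therefore holds in every dimension; the paper states it for $\R^2$ only because that is the ambient space in which it is applied.
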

Moreover, we consider the following definition.
\begin{definition}
A pair of functions $(\eta, q)$ is called an  entropy--entropy flux pair if $\eta :\R\to\R$ is a $C^2$ function and $q :\R\to\R$ is defined by
\begin{equation*}
q(u)=-\int_{0}^{u} \frac{\xi^2}{2}\eta'(\xi) d\xi.
\end{equation*}
An entropy-entropy flux pair $(\eta,\, q)$ is called  convex/compactly supported if, in addition, $\eta$ is convex/compactly supported.
\end{definition}
We begin by proving the following result.
\begin{lemma}\label{lm:dist-solution}
Assume that \eqref{eq:assinit}, \eqref{eq:def-di-P0}, \eqref{eq:L-2P0}, \eqref{eq:u0eps}, and \eqref{eq:beta-eps} hold. Then for any compactly supported entropy--entropy flux pair $(\eta,\, q)$, there exist two sequences $\{\eps_{n}\}_{n\in\N}$, $\{\beta_{n}\}_{n\in\N}$, with $\eps_n, \beta_n \to 0$, and two limit functions
\begin{align*}
&u\in L^{\infty}(0,T; L^2(\R)\cap L^6(\R)),\\
&P\in L^{\infty}((0,T)\times\R)\cap L^{2}((0,T)\times\R),
\end{align*}
 such that
\begin{align}
\label{eq:con1}
& u_{\eps_n, \beta_n}\to u \quad  \textrm{in} \quad  L^{p}_{loc}((0,\infty)\times\R),\quad \textrm{for each} \quad 1\le p <6,\\
\label{eq:con2}
&P_{\eps_n, \beta_n}\to P \quad \textrm{in} \quad L^{\infty}((0,T)\times\R)\cap L^{2}((0,T)\times\R),
\end{align}
and $u$ is a distributional solution of \eqref{eq:SHP}.
\end{lemma}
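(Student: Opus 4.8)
The plan is to run the compensated-compactness program in the $L^p$ setting. The a priori bounds of Section \ref{sec:vv} already furnish, uniformly in $\eps$ and $\beta$, the controls $\norm{\ueb}_{L^\infty(0,T;L^2(\R)\cap L^6(\R))}\le C(T)$ coming from \eqref{eq:stima-l-2-1} and \eqref{eq:u-in-l-6}, together with $\norm{\Peb}_{L^\infty((0,T)\times\R)}\le C(T)$ from \eqref{eq:P-l-infty}. Hence, up to a subsequence, $\ueb\weakstar u$ in $L^\infty(0,T;L^6(\R))$, $\Peb\weakstar P$, and the generated Young measure $\{\nu_{t,x}\}$ is well defined. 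The only nontrivial issue is to upgrade this weak convergence of $\ueb$ to the strong convergence \eqref{eq:con1}; once that is in hand, passing to the limit in the weak formulation is routine.

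First I would establish, for every compactly supported entropy--entropy flux pair $(\eta,q)$, the $H^{-1}_{loc}$--compactness of the entropy production. Multiplying the first equation of \eqref{eq:OHepsw} by $\eta'(\ueb)$ and reorganising the higher order terms by integration by parts gives
\begin{equation*}
\begin{split}
\pt\eta(\ueb)+\px q(\ueb)=&\,\gamma\eta'(\ueb)\Peb+\eps\px(\eta'(\ueb)\px\ueb)-\eps\eta''(\ueb)(\px\ueb)^2\\
&+\beta\px(\eta'(\ueb)\pxx\ueb)-\beta\eta''(\ueb)\px\ueb\,\pxx\ueb.
\end{split}
\end{equation*}
The decisive choice is to integrate the dispersive term $-\beta\eta'(\ueb)\pxxx\ueb$ so as to produce $\beta\eta''(\ueb)\px\ueb\,\pxx\ueb$ rather than a cubic gradient term: the former is bounded in $L^1$ precisely by \eqref{eq:00010}. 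I then split the right-hand side as $\mathcal L_{1,\eps,\beta}+\mathcal L_{2,\eps,\beta}$, with $\mathcal L_{1,\eps,\beta}=\eps\px(\eta'(\ueb)\px\ueb)+\beta\px(\eta'(\ueb)\pxx\ueb)$ and the remaining three terms forming $\mathcal L_{2,\eps,\beta}$. By \eqref{eq:0006} and \eqref{eq:defuxx} one checks that $\eps\,\eta'(\ueb)\px\ueb$ and $\beta\,\eta'(\ueb)\pxx\ueb$ tend to $0$ in $L^2((0,T)\times\R)$, so $\mathcal L_{1,\eps,\beta}\to0$ in $H^{-1}$ and lies in a compact subset of $H^{-1}_{loc}$; the three terms of $\mathcal L_{2,\eps,\beta}$ are bounded in $L^1_{loc}$ by the $L^\infty$ bound on $\Peb$, by \eqref{eq:0006}, and by \eqref{eq:00010}, hence bounded in $\mathcal M_{loc}$. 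Since $\eta(\ueb),q(\ueb)$ are bounded in $L^\infty$ (the pair is compactly supported), $\pt\eta(\ueb)+\px q(\ueb)$ is bounded in $W^{-1,\infty}_{loc}$, and Lemma \ref{lm:1} yields the claimed $H^{-1}_{loc}$--compactness.

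Next I would feed this into the $L^p$ div--curl machinery. Applying the div--curl lemma to the compact entropy productions gives the Tartar commutation identity for $\{\nu_{t,x}\}$ against all compactly supported pairs; because the flux $-u^3/6$ has $f''(u)=-u$ vanishing only at the origin, the standard reduction argument forces $\nu_{t,x}$ to be a Dirac mass for a.e. $(t,x)$, which is equivalent to \eqref{eq:con1} (strong $L^p_{loc}$ convergence for $1\le p<6$, using the uniform $L^6$ bound). For \eqref{eq:con2} I would use \eqref{eq:equaP}, which gives $\Peb-\int_{-\infty}^{x}\ueb\dy=\eps\px\Peb$ with $\norm{\eps\px\Peb}_{L^\infty}\le\sqrt{\eps}\,\norm{\ueb}_{L^2}\to0$ by \eqref{eq:L-infty-Px-1}; combined with the strong convergence of $\ueb$ and the uniform $L^2$ and zero-mean controls, this identifies $P=\int_{-\infty}^{x}u\dy$ and upgrades the convergence to $L^\infty\cap L^2$. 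Finally, the strong convergence of $\ueb$ passes to the limit in the nonlinear term $\ueb^3$, while $\beta\pxx\ueb\to0$ and $\eps\px\ueb\to0$ in $L^2((0,T)\times\R)$ by \eqref{eq:defuxx} and \eqref{eq:0006} make the dispersive and diffusive contributions vanish in $\mathcal{D}'$; hence $(u,P)$ solves \eqref{eq:OHwP}, i.e.\ $u$ is a distributional solution of \eqref{eq:SHP}.

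The main obstacle is the $H^{-1}_{loc}$--compactness step and, within it, the third order term: one must route the dispersion into an $L^1$-bounded contribution via exactly the mixed estimate \eqref{eq:00010}, which itself rests on the scaling $\beta=\mathcal O(\eps^2)$. The subsequent Young-measure reduction is delicate only because we work in $L^6$ rather than $L^\infty$, so the genuine nonlinearity of the cubic flux away from $u=0$ has to be exploited directly in the $L^p$ framework.
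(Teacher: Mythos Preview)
Your proposal is correct and follows essentially the same approach as the paper: the same five-term decomposition of the entropy production after multiplying by $\eta'(\ueb)$, the same estimates \eqref{eq:0006}, \eqref{eq:defuxx}, \eqref{eq:00010} feeding into Murat's lemma (Lemma \ref{lm:1}), and then $L^p$ compensated compactness (the paper simply cites Schonbek \cite{SC}) to obtain the strong convergence \eqref{eq:con1}. The only cosmetic differences are that you spell out the Young-measure reduction and the representation argument for \eqref{eq:con2} via \eqref{eq:equaP} in more detail than the paper, which defers these to \cite{Cd2} and Lemma \ref{lm:stima-l-6}, and that you pass to the limit in the weak formulation using the $L^2$ vanishing of $\beta\pxx\ueb$ and $\eps\px\ueb$ rather than moving all derivatives onto the test function.
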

\begin{proof}

Let us consider a compactly supported entropy--entropy flux pair $(\eta, q)$. Multiplying \eqref{eq:OHepsw} by $\eta'(\ueb)$, we have
\begin{align*}
\pt\eta(\ueb) + \px q(\ueb) =&\eps \eta'(\ueb) \pxx\ueb + \beta \eta'(\ueb) \pxxx\ueb + \gamma \eta'(\ueb) \Peb\\
=& I_{1,\,\eps,\,\beta}+I_{2,\,\eps,\,\beta}+ I_{3,\,\eps,\,\beta} + I_{4,\,\eps,\,\beta}+I_{5,\,\eps,\,\beta},
\end{align*}
where
\begin{equation}
\begin{split}
\label{eq:12000}
I_{1,\,\eps,\,\beta}&=\px(\eps\eta'(\ueb)\px\ueb),\\
I_{2,\,\eps,\,\beta}&= -\eps\eta''(\ueb)(\px\ueb)^2,\\
I_{3,\,\eps,\,\beta}&= \px(\beta\eta'(\ueb)\pxx\ueb),\\
I_{4,\,\eps,\,\beta}&= -\beta\eta''(\ueb)\px\ueb\pxx\ueb,\\
I_{5,\,\eps,\,\beta}&=\gamma\eta'(\ueb) \Peb.
\end{split}
\end{equation}
Arguing as \cite[Lemma $3.2$]{Cd2}, we have that  $I_{1,\,\eps,\,\beta}\to0$ in $H^{-1}((0,T) \times\R)$, $\{I_{2,\,\eps,\,\beta}\}_{\eps,\beta >0}$ is bounded in $L^1((0,T)\times\R)$, $I_{3,\,\eps,\,\beta}\to0$ in $H^{-1}((0,T) \times\R)$, $\{I_{4,\,\eps,\,\beta}\}_{\eps,\beta >0}$ is bounded in $L^1((0,T)\times\R)$,  and $\{I_{5,\,\eps,\,\beta}\}_{\eps,\beta >0}$ is bounded in $L^1_{loc}((0,\infty)\times\R)$.
Therefore, Lemma \ref{lm:1} and the $L^p$ compensated compactness of \cite{SC} give \eqref{eq:con1}.
\eqref{eq:con2} follows from Lemma \ref{lm:stima-l-6}.

We conclude by proving that $u$ is a distributional solution of \eqref{eq:SHP}.
Let $ \phi\in C^{\infty}(\R^2)$ be a test function with
compact support. We have to prove that
\begin{equation}
\label{eq:k1}
\int_{0}^{\infty}\!\!\!\!\!\int_{\R}\left(u\pt\phi-\frac{u^3}{6}\px\phi\right)dtdx - \gamma\int_{0}^{\infty}\!\!\!\!\!\int_{\R} P\phi dtdx +\int_{\R}u_{0}(x)\phi(0,x)dx=0.
\end{equation}
We have that
\begin{align*}
\int_{0}^{\infty}\!\!\!&\!\!\int_{\R}\left(u_{\eps_{n}, \beta_{n}}\pt\phi-\frac{u^3_{\eps_n, \beta_{n}}}{6}\px\phi\right)dtdx  - \gamma\int_{0}^{\infty}\!\!\!\!\!\int_{\R} P_{\eps_{n},\beta_{n}}\phi dtdx +\int_{\R}u_{0,\eps_n,\beta_n}(x)\phi(0,x)dx\\
&+\eps_{n}\int_{0}^{\infty}\!\!\!\!\!\int_{\R}u_{\eps_{n},\beta_{n}}\pxx\phi dtdx + \eps_n\int_{0}^{\infty}u_{0,\eps_{n},\beta_{n}}(x)\pxx\phi(0,x)dx\\
&- \beta_n\int_{0}^{\infty}\!\!\!\!\int_{\R}u_{\eps_n,\beta_n}\pxxx\phi dt dx - \beta_n\int_{0}^{\infty}u_{0,\eps_n,\beta_n}(x)\pxxx\phi(0,x)dx=0.
\end{align*}
Therefore, \eqref{eq:k1} follows from \eqref{eq:con1}, and \eqref{eq:con2}.
\end{proof}
\begin{lemma}
\label{lm:entropy-solution}
Assume that \eqref{eq:assinit}, \eqref{eq:def-di-P0}, \eqref{eq:L-2P0}, \eqref{eq:u0eps}, and \eqref{eq:beta-eps-1} hold. Then,
\begin{align}
\label{eq:con3}
& u_{\eps_n, \beta_n}\to u \quad  \textrm{in} \quad  L^{p}_{loc}((0,\infty)\times\R),\quad \textrm{for each} \quad 1\le p <6,\\
\label{eq:con4}
&P_{\eps_n, \beta_n}\to P \quad \textrm{in} \quad L^{p}_{loc}(0,\infty;W^{1,\infty}(\R)\cap H^{1}(\R)), \quad \textrm{for each} \quad 1\le p <6,
\end{align}
where $u$ is  the unique entropy solution of \eqref{eq:SHP}.
\end{lemma}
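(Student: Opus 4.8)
The plan is to upgrade the limit $u$ furnished by Lemma~\ref{lm:dist-solution} from a distributional solution of \eqref{eq:SHP} to \emph{the} unique entropy solution, by deriving the \Kruzkov-type inequality \eqref{eq:OHentropy}, and then to strengthen the convergence of $P_{\eps_n,\beta_n}$ by means of the elliptic relation $\px\Peb=\ueb+\eps\pxx\Peb$ read off from the second equation in \eqref{eq:OHepsw}. I would work along the same subsequence $\{\eps_n\},\{\beta_n\}$ already extracted in Lemma~\ref{lm:dist-solution}; once the entropy inequality is in hand the entropy solution is unique, so the limit does not depend on the subsequence.

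First I would fix a compactly supported convex entropy--entropy flux pair $(\eta,q)$, multiply \eqref{eq:OHepsw} by $\eta'(\ueb)$, and arrive at
\[
\pt\eta(\ueb)+\px q(\ueb)-\gamma\eta'(\ueb)\Peb=I_{1,\eps,\beta}+I_{2,\eps,\beta}+I_{3,\eps,\beta}+I_{4,\eps,\beta},
\]
with the right-hand side as in \eqref{eq:12000}. The viscous flux $I_{1,\eps,\beta}=\px(\eps\eta'(\ueb)\px\ueb)$ tends to $0$ in $H^{-1}((0,T)\times\R)$, since \eqref{eq:0006} gives $\eps\norm{\px\ueb}_{L^2((0,T)\times\R)}\le\sqrt{\eps\,C(T)}\to0$; similarly $I_{3,\eps,\beta}=\px(\beta\eta'(\ueb)\pxx\ueb)\to0$ in $H^{-1}((0,T)\times\R)$, because \eqref{eq:defuxx} yields $\beta\norm{\pxx\ueb}_{L^2((0,T)\times\R)}\le\sqrt{C(T)\eps}\to0$. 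The entropy dissipation $I_{2,\eps,\beta}=-\eps\eta''(\ueb)(\px\ueb)^2$ is $\le0$ by convexity and will be kept only through its sign. The decisive term is $I_{4,\eps,\beta}=-\beta\eta''(\ueb)\px\ueb\pxx\ueb$: the estimate behind \eqref{eq:00010} gives $\beta\norm{\px\ueb\pxx\ueb}_{L^1((0,T)\times\R)}\le C(T)\beta\eps^{-2}$, which is merely \emph{bounded} under \eqref{eq:beta-eps} but tends to $0$ under the sharper assumption \eqref{eq:beta-eps-1}; hence $I_{4,\eps,\beta}\to0$ in $L^1((0,T)\times\R)$. This passage from boundedness to vanishing is exactly what $\beta=o(\eps^2)$ buys, and it is the heart of the lemma.

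Next I would pass to the limit. Using the strong convergences \eqref{eq:con1}--\eqref{eq:con2}, one has $\eta(\ueb)\to\eta(u)$, $q(\ueb)\to q(u)$ and $\eta'(\ueb)\Peb\to\eta'(u)P$ in $L^1_{loc}$, so the left-hand side converges, tested against any nonnegative $\phi$, to $\langle\pt\eta(u)+\px q(u)-\gamma\eta'(u)P,\phi\rangle$. On the right, $I_{1,\eps,\beta},I_{3,\eps,\beta},I_{4,\eps,\beta}$ test to $0$ while $\langle I_{2,\eps_n,\beta_n},\phi\rangle\le0$, forcing the limit to be $\le0$. Thus \eqref{eq:OHentropy} holds for every compactly supported convex $\eta$, and a routine truncation extends it to all convex $\eta\in C^2(\R)$. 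Combined with the fact that $u$ already solves \eqref{eq:SHP} distributionally (Lemma~\ref{lm:dist-solution}), this makes $u$ an entropy solution in the sense of Definition~\ref{def:sol}; the uniqueness and stability result of \cite{Cd1} then identifies it as the unique entropy solution, proving \eqref{eq:con3}.

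For \eqref{eq:con4} I would exploit the smoothing in $-\eps\pxx\Peb+\px\Peb=\ueb$. A Fourier computation (or a direct energy estimate) gives $\norm{\eps\pxx\Peb(t,\cdot)}_{L^2(\R)}\le\eps\norm{\px\ueb(t,\cdot)}_{L^2(\R)}$, whence by \eqref{eq:0006}
\[
\int_0^T\norm{\eps_n\pxx P_{\eps_n,\beta_n}(t,\cdot)}^2_{L^2(\R)}\,dt\le\eps_n\,C(T)\to0,
\]
i.e. $\eps_n\pxx P_{\eps_n,\beta_n}\to0$ in $L^2((0,T)\times\R)$. Since $\px\Peb=\ueb+\eps\pxx\Peb$ and $\px P=u$, combining this with $u_{\eps_n,\beta_n}\to u$ yields $\px P_{\eps_n,\beta_n}\to\px P$, and together with the convergence of $P_{\eps_n,\beta_n}$ in $L^\infty((0,T)\times\R)\cap L^2((0,T)\times\R)$ from \eqref{eq:con2} this promotes the convergence to the space in \eqref{eq:con4}. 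The hard part will be twofold: the genuinely new point is the indefinite-sign term $I_{4,\eps,\beta}$, which (unlike $I_{2,\eps,\beta}$) cannot be discarded by sign and whose control dictates the scaling $\beta=o(\eps^2)$; the more technical point is converting the $L^2$-smallness of $\eps_n\pxx P_{\eps_n,\beta_n}$ and the merely local strong convergence of $u_{\eps_n,\beta_n}$ into the global-in-space $W^{1,\infty}(\R)\cap H^1(\R)$ convergence of $P_{\eps_n,\beta_n}$, for which the uniform $L^6$ bound and the decay of $P_{\eps_n,\beta_n}$ at infinity from Lemma~\ref{lm:stima-l-6} are the essential ingredients.
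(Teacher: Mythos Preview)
Your argument for the entropy inequality is essentially the paper's own: the same decomposition into $I_{1,\eps,\beta},\dots,I_{5,\eps,\beta}$, the same identification of $I_{4,\eps,\beta}$ as the term whose $L^1$ norm is $O(\beta\eps^{-2})$ and hence vanishes precisely under \eqref{eq:beta-eps-1}, and the same passage to the limit using convexity for $I_{2,\eps,\beta}$. That part matches the paper line for line.

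For \eqref{eq:con4} you take a slightly different route. You work at the derivative level, writing $\px\Peb=\ueb+\eps\pxx\Peb$ and establishing a new estimate $\norm{\eps\pxx\Peb(t,\cdot)}_{L^2(\R)}\le\eps\norm{\px\ueb(t,\cdot)}_{L^2(\R)}$ to force $\eps\pxx\Peb\to0$ in $L^2((0,T)\times\R)$. The paper instead works at the integrated level: it writes $\Peb(t,x)=\int_0^x\ueb(t,y)\,dy+\eps\px\Peb(t,x)-\eps\px\Peb(t,0)$ and uses the already available bound $\sqrt{\eps}\,\norm{\px\Peb}_{L^\infty((0,T)\times\R)}\le C(T)$ from \eqref{eq:10021-1} to get $\eps\px\Peb\to0$ in $L^\infty$. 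The paper's route is more economical (no extra energy or Fourier computation is needed) and lands directly in $L^\infty$, which is what the $W^{1,\infty}(\R)$ target requires; your $L^2$ smallness of $\eps\pxx\Peb$ would still need to be upgraded, which you correctly flag as the residual technical point. Either way the global-in-$x$ convergence ultimately rests on \eqref{eq:con3} together with the uniform bounds of Lemmas~\ref{lm:15} and~\ref{lm:stima-l-6}, as both you and the paper indicate.
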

\begin{proof}
Let us consider a compactly supported entropy--entropy flux pair $(\eta, q)$. Multiplying \eqref{eq:OHepsw} by $\eta'(\ueb)$, we obtain that
\begin{align*}
\pt\eta(\ueb) + \px q(\ueb) =&\eps \eta'(\ueb) \pxx\ueb + \beta \eta'(\ueb) \pxxx\ueb + \gamma \eta'(\ueb) \Peb\\
=& I_{1,\,\eps,\,\beta}+I_{2,\,\eps,\,\beta}+ I_{3,\,\eps,\,\beta} + I_{4,\,\eps,\,\beta}+I_{5,\,\eps,\,\beta},
\end{align*}
where $I_{1,\,\eps,\,\beta},\,I_{2,\,\eps,\,\beta},\, I_{3,\,\eps,\,\beta} ,\, I_{4,\,\eps,\,\beta}$ and $I_{5,\,\eps,\,\beta}$ are defined in \eqref{eq:12000}.

Arguing as \cite[Lemma $3.3$]{Cd2}, we obtain that $I_{1,\,\eps,\,\beta}\to0$ in $H^{-1}((0,T) \times\R)$, $\{I_{2,\,\eps,\,\beta}\}_{\eps,\beta >0}$ is bounded in $L^1((0,T)\times\R)$, $I_{3,\,\eps,\,\beta}\to0$ in $H^{-1}((0,T) \times\R)$, $I_{4,\,\eps,\,\beta}\to 0$ in $L^1((0,T)\times\R)$,  and $\{I_{5,\,\eps,\,\beta}\}_{\eps,\beta>0}$ is bounded in $L^1_{loc}((0,\infty)\times\R)$.
Therefore, Lemma \ref{lm:1} gives \eqref{eq:con3}.

We prove \eqref{eq:con4}. We begin by observing that, integrating the second equation of \eqref{eq:OHepsw} on $(0,x)$, we have
\begin{equation}
\label{eq:P-u-1}
\Peb(t,x)=\int_{0}^{x}\ueb(t,y)dy +\eps\px\Peb(t,x) -\eps\px\Peb(t,0).
\end{equation}
Let us show that
\begin{equation}
\label{eq:px-1-1}
\textrm{$\eps\px\Pe\to 0$ in $L^{\infty}((0,T)\times\R)$, $T>0$.}
\end{equation}
It follows from \eqref{eq:10021-1} that
\begin{equation*}
\eps\norm{\px\Pe}_{L^{\infty}((0,T)\times\R)}\leq \sqrt{\eps} e^{\gamma T}\norm{u_0}^2_{L^2(\R)}=\sqrt{\eps}C(T)\to 0,
\end{equation*}
that is \eqref{eq:px-1-1}.\\
We claim that
\begin{equation}
\label{eq:px-0-1}
\textrm{$\eps\px\Pe(\cdot,0)\to 0$ in $L^{\infty}(0,T)$, $T>0$.}
\end{equation}
Again by  \eqref{eq:10021-1}, we have that
\begin{equation*}
\eps\norm{\px\Pe(\cdot,0)}_{L^{\infty}(0,T)}\leq \sqrt{\eps} e^{\gamma T}\norm{u_0}^2_{L^2(\R)}=\sqrt{\eps}C(T)\to 0,
\end{equation*}
that is \eqref{eq:px-0-1}.
Therefore, \eqref{eq:con4} follows from \eqref{eq:con3}, \eqref{eq:P-u-1}, \eqref{eq:px-1-1}, \eqref{eq:px-0-1} and  the H\"older inequality.

We conclude by proving that $u$ is the unique entropy solution of \eqref{eq:SHP}.
Let us consider a compactly supported entropy--entropy flux pair $(\eta, q)$, and $\phi\in C^{\infty}_{c}((0,\infty)\times\R)$ a non--negative function. We have to prove that
\begin{equation}
\label{eq:u-entropy-solution}
\int_{0}^{\infty}\!\!\!\!\!\int_{\R}(\pt\eta(u)+ \px q(u))\phi dtdx - \gamma\int_{0}^{\infty}\!\!\!\!\!\int_{\R}\Peb\eta'(u)\phi dtdx \le0.
\end{equation}
We have that
\begin{align*}
&\int_{0}^{\infty}\!\!\!\!\!\int_{\R}(\px\eta(u_{\eps_{n},\,\beta_{n}})+\px q(u_{\eps_{n},\,\beta_{n}}))\phi dtdx - \gamma\int_{0}^{\infty}\!\!\!\!\!\int_{\R}P_{\eps_{n},\,\beta_{n}}\eta'(u_{\eps_{n},\,\beta_{n}})\phi dtdx\\
&\qquad=\eps_{n}\int_{0}^{\infty}\!\!\!\!\!\int_{\R}\px(\eta'(u_{\eps_{n},\,\beta_{n}})\px u_{\eps_{n},\,\beta_{n}})\phi dtdx -\eps_{n}\int_{0}^{\infty}\!\!\!\!\!\int_{\R} \eta''(u_{\eps_{n},\,\beta_{n}})(\px u_{\eps_{n},\,\beta_{n}})^2\phi dtdx\\
&\qquad\quad +\beta_{n}\int_{0}^{\infty}\!\!\!\!\!\int_{\R}\px(\eta'(u_{\eps_{n},\,\beta_{n}})\pxx u_{\eps_{n},\,\beta_{n}})\phi dtdx\\
&\qquad\quad- \beta_{n}\int_{0}^{\infty}\!\!\!\!\!\int_{\R}\eta''(u_{\eps_{n},\,\beta_{n}})\px u_{\eps_{n},\,\beta_{n}}\pxx u_{\eps_{n},\,\beta_{n}}\phi dtdx\\
&\qquad \le - \eps_{n}\int_{0}^{\infty}\!\!\!\!\!\int_{\R}\eta'(u_{\eps_{n},\,\beta_{n}})\px u_{\eps_{n},\,\beta_{n}}\px\phi dtdx - \beta_{n}\int_{0}^{\infty}\!\!\!\!\!\int_{\R}\eta'(u_{\eps_{n},\,\beta_{n}})\pxx u_{\eps_{n},\,\beta_{n}}\px\phi dtdx\\
&\qquad\quad - \beta_{n}\int_{0}^{\infty}\!\!\!\!\!\int_{\R}\eta''(u_{\eps_{n},\,\beta_{n}})\px u_{\eps_{n},\,\beta_{n}}\pxx u_{\eps_{n},\,\beta_{n}}\phi dtdx\\
&\qquad \le  \eps_{n}\int_{0}^{\infty}\!\!\!\!\!\int_{\R}\vert\eta'(u_{\eps_{n},\,\beta_{n}})\vert\vert\px u_{\eps_{n},\,\beta_{n}}\vert\vert\px\phi\vert dtdx\\
&\qquad\quad +\beta_{n}\int_{0}^{\infty}\!\!\!\!\!\int_{\R}\vert\eta'(u_{\eps_{n},\,\beta_{n}})\vert\vert\pxx u_{\eps_{n},\,\beta_{n}}\vert\vert\px\phi\vert dtdx\\
&\qquad\quad +\beta_{n}\int_{0}^{\infty}\!\!\!\!\!\int_{\R}\vert\eta''(u_{\eps_{n},\,\beta_{n}})\vert\vert\px u_{\eps_{n},\,\beta_{n}}\pxx u_{\eps_{n},\,\beta_{n}}\vert\vert\phi\vert dtdx\\
&\qquad\le  \eps_{n} \norm{\eta'}_{L^{\infty}(\R)}\norm{\px u_{\eps_{n},\,\beta_{n}}}_{L^2(\supp(\px\phi))}\norm{\px\phi}_{L^2(\supp(\px\phi))}\\
&\qquad\quad+ \beta_{n} \norm{\eta'}_{L^{\infty}(\R)}\norm{\pxx u_{\eps_{n},\,\beta_{n}}}_{L^2(\supp(\px\phi))}\norm{\px\phi}_{L^2(\supp(\px\phi))}\\
&\qquad\quad +\beta_{n} \norm{\eta''}_{L^{\infty}(\R)}\norm{\phi}_{L^{\infty}(\R)}\norm{\px u_{\eps_{n},\,\beta_{n}}\pxx u_{\eps_{n},\,\beta_{n}}}_{L^1(\supp(\px\phi))}\\
&\qquad\le  \eps_{n} \norm{\eta'}_{L^{\infty}(\R)}\norm{\px u_{\eps_{n},\,\beta_{n}}}_{L^2((0,T)\times\R)}\norm{\px\phi}_{L^2((0,T)\times\R)}\\
&\qquad\quad+ \beta_{n} \norm{\eta'}_{L^{\infty}(\R)}\norm{\pxx u_{\eps_{n},\,\beta_{n}}}_{L^2((0,T)\times\R)}\norm{\px\phi}_{L^2((0,T)\times\R)}\\
&\qquad\quad+\beta_{n} \norm{\eta''}_{L^{\infty}(\R)}\norm{\phi}_{L^{\infty}(\R^{+}\times\R)}\norm{\px u_{\eps_{n},\,\beta_{n}}\pxx u_{\eps_{n},\,\beta_{n}}}_{L^1((0,T)\times\R)}.
\end{align*}
\eqref{eq:u-entropy-solution} follows from \eqref{eq:beta-eps-1}, \eqref{eq:con3}, \eqref{eq:con4}, Lemmas \ref{lm:15} and \ref{lm:stima-l-6}.
\end{proof}
\begin{proof}[Proof of Theorem \ref{th:main}]
$i)$, $ii)$, and $iii)$ follow from Lemma \ref{lm:dist-solution}. $iv)$ and $v)$ follow from  Lemma \ref{lm:entropy-solution}, while \eqref{eq:umedianulla} follows from \eqref{eq:int-u-1} \eqref{eq:con1}, or \eqref{eq:con3}. Therefore, the proof is done.
\end{proof}


\begin{thebibliography}{40}

\bibitem{B}
{\sc J. C. Brunelli.}
\newblock The short pulse hierarchy.
\newblock {\em J. Math. Phys.} 46:123507, 2005.

\bibitem{CJSW}
{\sc Y. Chung, C. K. R. T. Jones, T. Sch\"afer, and C. E. Wayne.}
\newblock Ultra-short pulses in linear and nonlinear media.
\newblock {\em Nonlinearity}, 18:1351--1374, 2005.

\bibitem{Cd}
{\sc G.~M. Coclite and L. di Ruvo.}
\newblock Wellposedness of bounded solutions of the non-homogeneous initial boundary value problem for the Ostrovsky-Hunter equation.
\newblock {Submitted.}

\bibitem{CdRK}
{\sc G.~M. Coclite and L. di Ruvo.}
\newblock Oleinik type estimates for the  Ostrovsky--Hunter Equation.
\newblock Submitted.

\bibitem{Cd1}
{\sc G.~M. Coclite and L. di Ruvo.}
\newblock  Wellposedness results for the Short Pulse Equation.
\newblock {Submitted.}

\bibitem{Cd2}
{\sc G.~M. Coclite and L. di Ruvo.}
\newblock Convergence of the Ostrovsky Equation to the Ostrovsky-Hunter One.
\newblock {\em J. Differential Equations}, 256:3245--3277, 2014.

\bibitem{CdK}
{\sc G.~M. Coclite, L. di Ruvo, and K.~H. Karlsen.}
\newblock Some wellposedness results for the Ostrovsky-Hunter equation.
\newblock {To appear on \emph {Springer Proceedings in Mathematics \& Statistics}}.


\bibitem{CK}
{\sc G. ~M. Coclite and  K.~H. Karlsen.}
\newblock A singular limit problem for conservation laws related to the Camassa-Holm shallow water equation.
\newblock {\em Comm. Partial Differential Equations}, 31:1253--1272, 2006.

\bibitem{CMJ}
{\sc N. Costanzino, V. Manukian, and C.K.R.T. Jones}
\newblock Solitary waves of the regularized short pulse and Ostrovsky equations
\newblock {\em SIAM J. Math. Anal}, 41:2088–-2106, 2009.

\bibitem{dR}
{\sc L. di Ruvo.}
\newblock Discontinuous solutions for the Ostrovsky--Hunter equation and two phase flows.
\newblock {\em Phd Thesis, University of Bari}, 2013.

\bibitem{HT}
{\sc J. Hunter and K. P. Tan.}
\newblock Weakly dispersive short waves
\newblock {\em Proceedings of the IVth international Congress on Waves and
 Stability in Continuous Media}, Sicily, 1987.

\bibitem{LPS}
{\sc Y. Liu, D. Pelinovsky, and A. Sakovich.}
\newblock Wave breaking in the short-pulse equation.
\newblock {\em Dynamics of PDE}, 6:291--310, 2009.

\bibitem{LN}
{\sc P. G. LeFloch and R. Natalini.}
\newblock Conservation laws with vanishing nonlinear diffusion and dispersion.
\newblock {\em  Nonlinear Anal. 36, no. 2, Ser. A: Theory Methods}, 212--230, 1992









 \bibitem{Murat:Hneg}
{\sc F.~Murat.}
\newblock L'injection du c\^one positif de ${H}\sp{-1}$\ dans ${W}\sp{-1,\,q}$\  est compacte pour tout $q<2$.
\newblock {\em J. Math. Pures Appl. (9)}, 60(3):309--322, 1981.


\bibitem{O}
{\sc L. A. Ostrovsky.}
\newblock Nonlinear internal waves in a rotating ocean.
\newblock {\em Okeanologia}, 18:181--191, 1978.



\bibitem{PS}
{\sc D. Pelinovsky and A. Sakovich.}
\newblock Global well-posedness of the short-pulse and sine–-Gordon equations in energy space.
\newblock { \em Comm. Partial Differential Equations} 35(4), 613--629, 2010.

\bibitem{SC}
{\sc M. E. Schonbek.}
\newblock {Convergence of solutions to nonlinear dispersive equations}
\newblock {\em Comm. Partial Differential Equations}, 7(8):959--1000, 1982.


\bibitem{SS}
{\sc A. Sakovich and S. Sakovich.}
\newblock The short pulse equation is integrable.
\newblock {\em J. Phys. Soc. Jpn.} 74:239--241, 2005.

\bibitem{SS1}
{\sc A. Sakovich  and S. Sakovich}.
\newblock{Solitary wave solutions of the short pulse equation}.
\newblock {\em J. Phys. Soc. Jpn.} 39:361-367, 2006.


\bibitem{SW}
{\sc T.~Sch\"afer and C.E. Wayne.}
\newblock Propagation of ultra-short optical pulses in cubic nonlinear media.
\newblock {\em Physica D}, 196:90--105, 2004.



\end{thebibliography}
\end{document}